\documentclass[11pt]{amsart}
\usepackage[utf8]{inputenc}
\usepackage{amssymb,amsmath}
\usepackage{amsthm}
\usepackage{graphicx}
\usepackage{color}
\usepackage{tikz}
\usepackage{pgfplots}
\usepackage{enumitem}
\usepackage{algorithm}
\usepackage{setspace}
\usepackage{mathtools}
\usepackage{xfrac}

\usepackage{multicol} 
\setlength{\multicolsep}{3.0pt plus 1.0pt minus 0.75pt}
\usepackage{soul}
\usepackage{subcaption}
\usepackage{booktabs}
\usepackage{array}
\newcolumntype{L}[1]{>{\raggedright\let\newline\\\arraybackslash\hspace{0pt}}m{#1}}
\newcolumntype{C}[1]{>{\centering\let\newline\\\arraybackslash\hspace{0pt}}m{#1}}
\newcolumntype{R}[1]{>{\raggedleft\let\newline\\\arraybackslash\hspace{0pt}}m{#1}}
\usepackage{bbold}

\usepackage[hidelinks]{hyperref}
%

%
\usepackage{bbold}

\newcommand{\ddiv}{\operatorname{div}}
\newcommand{\tri}{\mathcal{T}}

\newcommand{\opid}{\mathtt{id}}

\newcommand{\opnorm}[1]{{\left\vert\kern-0.25ex\left\vert\kern-0.25ex\left\vert #1 
		\right\vert\kern-0.25ex\right\vert\kern-0.25ex\right\vert}}

\newcommand{\calI}{\mathcal{I}}

\newcommand{\frakB}{\mathfrak{B}}


\newcommand\dx{\,\text{d}x}

\newcommand{\dist}{\mathrm{dist}}

\newcommand{\AC}{\mathfrak{A}}

\newcommand{\Vh}{V_h}

\newcommand{\Ve}{\ensuremath{H^1_0(D)}} 

\newcommand{\IhK}{\calI_h^K}
\newcommand{\PipH}{\Pi_H^p}
\newcommand{\Nb}{\mathsf{N}}


\newcommand{\nK}{m_K} 

\newcommand{\LamKj}{\Lambda_{K,j}} 
\newcommand{\tLamKj}{\tilde\Lambda_{K,j}} 
\newcommand{\tLamKjl}{\tilde\Lambda^\ell_{K,j}}

\DeclareMathOperator*{\argmin}{arg\,min}


\newcommand{\vC}{\tilde{v}_H}

\newcommand{\uC}{\tilde{u}_H}

\newcommand{\vhC}{\tilde{v}_{H,h}}
\newcommand{\uhC}{\tilde{u}_{H,h}}

\newcommand{\uB}{\bar{u}_H}

\newcommand{\vH}{v_H}
\newcommand{\vK}{v_K}

\newcommand{\lamKl}{\lambda^\ell_K}

\newcommand{\lamvH}{\lambda_{\vH}}

\newcommand{\muH}{\mu_H}
\newcommand{\wH}{w_H}

\newcommand{\vh}{v_h}

\newcommand{\uh}{u_h}

\def\N{\mathbb{N}}

\def\R{\mathbb{R}}

\def\calR{\mathcal{R}}

\def\calT{\mathcal{T}}
\def\VHp{V_H^p}
\def\Vhp{V_{h,p^\prime}}
\def\tVHp{\tilde{V}_H^p}
\def\tVHpl{\tilde{V}_H^{p,\ell}}
\def\tVHhp{\tilde{V}_{H,h}^p}
\def\tVHhpl{\tilde{V}_{H,h}^{p,\ell}}

\definecolor{myBlue}{RGB}{30,144,255}
\definecolor{myGreen}{RGB}{69,169,0}
\definecolor{myRed}{RGB}{165,12,42} 
\definecolor{myOrange}{RGB}{225,92,22} 
\definecolor{color2}{RGB}{255, 126, 126}
\definecolor{color3}{RGB}{0, 100, 0}
\definecolor{color1}{RGB}{176, 226, 255}

\tikzset{cross/.style={cross out, draw=black, minimum size=2*(#1-\pgflinewidth), inner sep=0pt, outer sep=0pt},
	cross/.default={0.6ex}}


\newtheorem{theorem}{Theorem}[section]

\newtheorem{lemma}[theorem]{Lemma}
\newtheorem{corollary}[theorem]{Corollary}

\theoremstyle{definition}

\newtheorem{remark}[theorem]{Remark}

\allowdisplaybreaks
 
%
\pgfplotstableset{
	every head row/.style={before row=\toprule, after row=\midrule},
	every last row/.style={after row=\bottomrule},
}

\numberwithin{theorem}{section}
\numberwithin{equation}{section}
\numberwithin{table}{section}
\numberwithin{figure}{section}
\textheight=215mm
\textwidth=150mm
\evensidemargin=30.0mm
\oddsidemargin=30.0mm
\topmargin=-1mm
\hoffset=-25.4mm
\allowdisplaybreaks
\begin{document}
\title[A High-Order Multiscale Method]{A High-Order Approach to Elliptic Multiscale Problems with General Unstructured Coefficients}
\author[Roland~Maier]{Roland~Maier$^\dagger$}
\address{${}^{\dagger}$ Department of Mathematical Sciences, Chalmers University of Technology and University of Gothenburg, 412 96 Gothenburg, Sweden}
\email{rolandma@chalmers.se}
\date{\today}
\keywords{}
%
%
\begin{abstract}
We propose a multiscale approach for an elliptic multiscale setting with general unstructured diffusion coefficients that is able to achieve high-order convergence rates with respect to the mesh parameter and the polynomial degree. The method allows for suitable localization and does not rely on additional regularity assumptions on the domain, the diffusion coefficient, or the exact (weak) solution as typically required for high-order approaches. Rigorous a priori error estimates are presented with respect to the involved discretization parameters, and the interplay between these parameters as well as the performance of the method are studied numerically.
\end{abstract}
%
%
\maketitle
{\tiny {\bf Keywords.} multiscale method, numerical homogenization, high-order method
}\\
\indent
{\tiny {\bf AMS subject classifications.}  
{\bf 65N12}, 
{\bf 65N30} 
} 

\section{Introduction}\label{s:intro}

Computational multiscale methods are popular tools to deal with microscopic features of partial differential equations (PDEs) that are typically encoded in an underlying material coefficient. It is well-known that standard finite element methods only achieve acceptable results if varying micro-features are resolved by the corresponding finite element mesh. Multiscale methods aim to overcome this problem and achieve good approximation properties already for coarse-level simulations at the cost of a moderate computational overhead. Prominent first-order approaches in the context of elliptic PDEs are the heterogeneous multiscale method~\cite{EE03,EE05,AbdEEV12}, (generalized) multiscale finite element methods~\cite{BabO83,BabCO94,HowW97,BabL11,EfeGH13}, adaptive local bases~\cite{GraGS12,Wey16}, and rough polyharmonic splines~\cite{OwhZB14}. 
Under appropriate smoothness assumptions on the material coefficient or the exact solution, higher-order multiscale methods have been considered for instance in~\cite{LiPT12,AbdB12} in connection with the heterogeneous multiscale method or in~\cite{HarPV13,AraHPV13}, known as multiscale hybrid-mixed methods.  
Other high-order approaches are the method presented in~\cite{AllB05,HesZZ14} related to the multiscale finite element method or the multiscale hybrid high-order method~\cite{CicEL19} that is designed to work in the context of general polytopal meshes. These methods, however, only yield high-order estimates if suitable smoothness assumptions hold. 
In the context of very general $L^\infty$-coefficients 
that do not allow one to exploit higher-order regularity of the exact solution, all the above-mentioned multiscale methods at most provide first-order convergence results in $H^1$. 

In this paper, we analyze a higher-order multiscale method for an elliptic model problem which is able to achieve high-order convergence rates for general unstructured $L^\infty$-coefficients  
only from additional (piecewise) smoothness assumptions on the force term. The construction is motivated by the localized orthogonal decomposition (LOD) method \cite{MalP14,HenP13}, and in particular its discontinuous version~\cite{ElfGMP13,ElfGM13}, as well as the multi-level construction in~\cite{Owh17} known as gamblets. 

The classical LOD method introduced in~\cite{MalP14} is a multiscale technique based on a~first-order conforming finite element space and a corresponding local quasi-interpolation operator that fulfills certain interpolation properties. This operator and its properties are key to deriving the first-order error estimates of the method. 
In general, one could generalize the idea and consider higher-order conforming discrete spaces as used in the context of higher-order finite element methods (see, e.g., \cite{BabG96,Sch98}). There even exist local quasi-interpolation operators for such spaces without restrictive smoothness assumptions \cite{Mel05} that fulfill properties similar to the ones required for the LOD method. For general non-smooth coefficients, however, high-order rates with respect to the mesh size can only be obtained if the quasi-interpolation operator fulfills additional orthogonality properties with respect to the $L^2$-scalar product. To the best of our knowledge, such a local quasi-interpolation operator is not known to date, and its construction might be a delicate task.
To overcome this difficulty, discontinuous discrete spaces are a suitable choice. The idea to use discontinuous functions traces back to \cite{ElfGMP13,ElfGM13}, where the authors proposed a first-order discontinuous Galerkin multiscale method in the spirit of the LOD. In~\cite{Owh17}, gamblets were introduced that provide an abstract setting that generalizes the LOD formulation. Their construction is based on constraint energy minimization problems with an arbitrarily chosen discrete space for the constraint conditions. In particular, one may employ discontinuous high-order finite element spaces. The potential of such spaces to enable higher-order convergence rates has not yet been exploited, but the approach has already been addressed in connection with higher-order differential operators in \cite{OwhS19}.

In the present work, we rely on a two-scale gamblet-construction in the spirit of~\cite{Owh17} combined with piecewise polynomials for the constraints. This results in a conforming discrete multiscale space despite the discontinuous constraint conditions. Our main focus lies on a thorough analysis of the corresponding Galerkin method in terms of the convergence behavior with respect to both the mesh size $H$ and the polynomial degree~$p$. We are able to prove that our (ideal) multiscale approximation~$u_\mathrm{ms}$ and the exact solution $u$ fulfill an error estimate of the form
\begin{equation*}
\|u - u_\mathrm{ms}\|_{H^1(D)} \leq C(s) \,\bigg(\frac{H}{p}\bigg)^{s+1} |f|_{H^s(\tri_H)}, \quad s \leq p+1, 
\end{equation*} 
with the sole requirement of a piecewise regular right-hand side $f$ with respect to the mesh~$\tri_H$; cf.~Theorem~\ref{t:errODp}. In particular, the error estimate holds under minimal regularity assumptions on the domain (Lipschitz), the diffusion coefficient ($L^\infty$), and the exact solution $u$ ($H^1$). We also show that a similar result can be retained for a fully discrete and localized variant of the method; cf.~Theorem~\ref{t:errfullydiscretep}.

The remaining parts of the paper are organized as follows. In Section~\ref{s:highorder}, we introduce the elliptic model problem and discontinuous coarse finite element spaces. Based on these spaces, we then construct a high-order multiscale space in Section~\ref{s:highorder} and analyze the corresponding ideal high-order multiscale method. In Section~\ref{s:practical}, we present a practical version of the method for which we finally provide numerical examples in Section~\ref{s:numexpp}. 

\subsection*{Notation} Throughout this work, we use the following notation. We write $C$ for any positive constant that is independent of the mesh sizes $H,\,h$ as well as the polynomial degree $p$, the localization parameter $\ell$, and the microscopic scale $\epsilon$. To indicate an explicit dependence on a parameter $\xi$, we may write $C_\xi$. We further abbreviate $a \leq C\, b$ by $a \lesssim b$ and use $a \sim b$ if $a \lesssim b$ and $a \gtrsim b$. 

\section{Problem Formulation and Discrete Spaces}

\subsection{Elliptic Model Problem}

We consider the prototypical second-order diffusion problem
\begin{equation}\label{eq:PDEell}
\begin{aligned}
-\ddiv (A \nabla u) &= f &&\text{ in } D,\\
u &= 0 &&\text{ on } \partial D,
\end{aligned}
\end{equation}
where $D \subseteq \R^d$, $d\in\{1,2,3\}$, is a bounded and polytopal Lipschitz domain and $f \in L^2(D)$. We assume the coefficient $A$ to encode microscopic features of the medium on some scale~$\epsilon$ and to be \emph{admissible}, i.e., it belongs to the set
\begin{equation}\label{eq:admissibleA}
\AC := \left\{\begin{aligned} 
&A \in L^\infty(D;\R_\mathrm{sym}^{d\times d})\,:\,\exists\, 0 < \alpha \leq \beta < \infty\,:\\&\forall \xi \in \R^d,\, \text{a.a. } x \in D\,:\, \alpha |\xi|^2 \leq A(x)\xi \cdot \xi \leq \beta |\xi|^2\end{aligned}\right\}
\end{equation}
with minimal assumptions. 
For a given coefficient $A \in \AC$, we write $\alpha$ for the largest possible choice of $\alpha$ in the definition \eqref{eq:admissibleA} and $\beta$ for the $L^\infty$-norm of $A$, i.e., $\beta = \|A\|_{L^\infty(D;\R_\mathrm{sym}^{d\times d})}$, although this choice of $\beta$ might not be the minimal constant with respect to the estimate in \eqref{eq:admissibleA}. We emphasize that also positive and bounded scalar coefficients are admissible, since these coefficients may simply be multiplied by the identity matrix. 

The variational formulation of \eqref{eq:PDEell} seeks a solution $u \in \Ve$ that solves
\begin{equation}\label{eq:PDEellweak}
a(u,v) = (f,v)_{L^2(D)}
\end{equation}
for all $v \in \Ve$, where 
\begin{equation*}
a(w,v) := \int_D A \nabla w \cdot \nabla v \dx, \quad w,\,v\in\Ve. 
\end{equation*}
Note that the solution $u$ of \eqref{eq:PDEellweak} is unique by the Lax-Milgram Theorem and it holds that
\begin{equation}\label{eq:stabuell}
\|\nabla u\|_{L^2(D)} \leq \alpha^{-1}\,\|f\|_{L^2(D)}.
\end{equation}
The aim of the multiscale construction in Section~\ref{s:highorder} below is to provide a suitable approximation of the solution $u$ in \eqref{eq:PDEellweak}. For the construction, we particularly require discontinuous high-order finite element spaces.

\subsection{Discontinuous discrete spaces}

Let $\{\tri_H\}_{H > 0}$ be a family of regular decompositions of the domain $D$ into quasi-uniform \emph{$d$-rectangles} on the scale $H$ as described in \cite[Ch.~2~\&~3]{Cia78}. Further, denote with $\VHp$ the space of piecewise polynomial functions with prescribed maximal coordinate degree $p$, i.e.,
\begin{equation*}
\VHp := \left\{\begin{aligned} 
v \in L^2(D) \,\colon\, \forall K \in \tri_H\,\colon\, v\vert_{{K}} \text{ is a polynomial}\\ 
\text{of coordinate degree $\leq p$}\end{aligned}\right\}.
\end{equation*}
For any $S \subseteq D$, we write $\VHp(S)$ for the restriction of $\VHp$ to the subdomain $S$. In particular, for any $K \in \tri_H$, the restricted space $\VHp(K)$ is exactly the space of polynomials up to degree $p$ in each coordinate direction on the element $K$.
For later use, we also define for $k \in \N$ the \emph{broken Sobolev space} $H^k(\tri_H)$ by
\begin{equation*}
H^k(\tri_H) := \{v \in L^2(D)\,\colon\, \forall K \in \tri_H\,\colon\, v\vert_K \in H^k(K)\}
\end{equation*}
with the seminorm 
\begin{equation*} 
|\cdot|_{H^k(\tri_H)} := \sum_{K \in \tri_H} |\cdot|^2_{H^k(K)}, 
\end{equation*}
where $|\cdot|_{H^k(S)} := \|\nabla^k \cdot\|_{L^2(S)}$ denotes the $H^k$-seminorm on $S\subseteq D$.

The next step towards our multiscale construction consists in defining a projection operator onto the space $\VHp$ that fulfills appropriate local stability and approximation properties. Here, we use the $L^2$-projection $\PipH\colon L^2(D)\to\VHp$ defined for any $v \in L^2(D)$ by
\begin{equation}\label{eq:defL2proj}
\big(\PipH v, \wH\big)_{L^2(D)} = \big(v, \wH\big)_{L^2(D)}
\end{equation}
for all $\wH \in \VHp$. The operator $\PipH$ is local due to the element-wise definition of the space~$\VHp$ and the possible discontinuities across element boundaries. That is, the definition of~$\PipH$ in \eqref{eq:defL2proj} is equivalent to the element-wise characterization
\begin{equation}\label{eq:defL2projloc}
\big({(\PipH v)\vert}_{K}, q\big)_{L^2(K)} = \big(v, q\big)_{L^2(K)}
\end{equation}
for all $q \in \VHp(K)$ and $K\in\tri_H$. 
For the sake of readability, we abbreviate $\Pi := \PipH$ if $p$ and $H$ are explicitly given and there is no possibility of confusion. 

For any $K \in \tri_H$, the $L^2$-stability of $\Pi$ follows directly from equation \eqref{eq:defL2projloc} with the choice $q = {(\Pi v)\vert}_{K}$ and reads
\begin{equation}\label{eq:PistabL2}
\|\Pi v\|_{L^2(K)} \leq \|v\|_{L^2(K)}
\end{equation}
for all $v\in L^2(K)$. 
Further, it holds that 
\begin{equation}\label{eq:Piapprox}
\|(\opid-\Pi) v\|_{L^2(K)} \leq C_{\Pi}\frac{H}{p}\, \|\nabla v\|_{L^2(K)}
\end{equation}
for all $v\in H^1(K)$; see, e.g., \cite{Sch98,HouSS02,Geo03}.
If $v \in H^k(K)$ for $k \in \N$ and $k \leq p+1$, we even have
\begin{equation}\label{eq:Piapproxk}
\|(\opid-\Pi) v\|_{L^2(K)} \leq C_{\Pi}\,\Phi(p,k)\,H^k\, |v|_{H^k(K)}
\end{equation}
with a constant $C_{\Pi}$ that does not depend on $H$ or $p$ and
\begin{equation*}
\Phi(p,k) := \bigg(\frac{(p+1-k)!}{(p+1+k)!}\bigg)^{1/2}.
\end{equation*}
We emphasize that due to the true locality of the inequalities \eqref{eq:PistabL2} and \eqref{eq:Piapprox}, the results immediately generalize to unions of elements and, in particular, to a~global result on the domain $D$ in the sense of an element-wise gradient on the right-hand side. 
At this point, we also introduce the inverse inequality for polynomials which states that 
\begin{equation}\label{eq:invineq}
\| \nabla q \|_{L^2(K)} \leq C_\mathrm{inv}H^{-1}p^2 \,\| q \|_{L^2(K)}
\end{equation}
for $K\in\tri_H$ and for all $q \in \VHp(K)$; 
see, e.g., \cite{Sch98,GraHS05,Geo08}. As above, this result also holds globally, i.e.,
\begin{equation*}
|\vH |_{H^1(\tri_H)} \leq C_\mathrm{inv}H^{-1}p^2 \,\| \vH \|_{L^2(D)}
\end{equation*}
for all $\vH \in \VHp$. 
We emphasize that $\Pi\colon L^2(D) \to \VHp$ is obviously surjective as an operator from $L^2(D)$ to the non-conforming space $\VHp$. For the following construction, however, we explicitly require that $\Pi$ is also surjective as an operator from $\Ve$ to $\VHp$. This very important condition is only mentioned here and rigorously proved in Section~\ref{ss:infsup} in order to improve the clarity of presentation.

\section{High-Order Multiscale Approximation}\label{s:highorder}

In this section, we state and analyze an ideal multiscale approach to discretize problem \eqref{eq:PDEellweak}. Therefore, we introduce a high-order multiscale space which is then used as discretization space for a Galerkin method.  

\subsection{Ideal trial and test space}

In the spirit of the LOD method and gamblets, we construct an operator $\calR\colon \VHp \to \Ve$ that assigns to each $\vH\in \VHp$ a function in $\Ve$ whose $L^2$-projection is exactly $\vH$. Such functions exist by the surjectivity of $\Pi\vert_{\scalebox{.65}{$\Ve$}}$ (see Theorem~\ref{t:surjectivePi} and Corollary~\ref{c:locbubble} below) but we particularly want the space $\calR\VHp$ to have improved approximation properties compared to a classical finite element space for which error estimates typically depend on the scale of microscopic oscillations. 

We define $\calR\colon \VHp \to \Ve$ for any $\vH \in\VHp$ as the solution of the saddle point problem
\begin{equation}\label{eq:saddlepoint}
\begin{aligned}
a(\calR \vH, v)\qquad\, &+& (\lamvH, v)_{L^2(D)} \quad&=\quad 0,\\
(\calR \vH,\muH)_{L^2(D)} && &=\quad (\vH, \muH)_{L^2(D)}
\end{aligned}
\end{equation}
for all $v \in \Ve$ and all $\muH \in \VHp$, where $\lamvH \in \VHp$ is the associated Lagrange multiplier. The solution $(\calR \vH,\lamvH) \in \Ve \times \VHp$ of \eqref{eq:saddlepoint} exists and is unique due to the equivalent definition
\begin{equation*}
\calR \vH := \argmin_{v \in \Ve}\, a(v,v) \quad\text{ subject to }\quad\Pi v =  \vH,
\end{equation*}
which is well-posed by the surjectivity of $\Pi\vert_{\scalebox{.65}{$\Ve$}}$.
We now set $\tVHp := \calR\VHp \subseteq \Ve$ and observe that $\dim \tVHp = \dim \VHp$ because ${\calR\colon \VHp \to \tVHp}$ is a bijection with inverse $\Pi\vert_{\scalebox{.65}{$\tVHp$}}$. 

\subsection{The ideal method}\label{ss:idealmethod}

Using $\tVHp$ as test and trial space for a continuous Galerkin approach, we obtain a multiscale method which computes a finite-dimensional approximation of \eqref{eq:PDEellweak}. This so-called \emph{ideal method} reads: find $\uC \in \tVHp$ such that 
\begin{equation}\label{eq:PDEellOD}
a(\uC,\vC) = (f,\vC)_{L^2(D)}
\end{equation}
for all $\vC \in \tVHp$. As for the variational problem \eqref{eq:PDEellweak}, we directly get the well-posedness of \eqref{eq:PDEellOD} from the Lax-Milgram Theorem due to the conformity of $\tVHp$. 

In the following theorem, we quantify the error between the solutions of \eqref{eq:PDEellweak} and~\eqref{eq:PDEellOD} under additional (piecewise) regularity assumptions on the right-hand side $f$ and independently of possible oscillations of the coefficient $A$. 

\begin{theorem}[Error of the ideal method]\label{t:errODp}
Assume that $f \in H^k(\tri_H),\, k \in \N_0$, and define $s := \min\{k,p+1\}$. Further, let $u \in \Ve$ and $\uC \in \tVHp$ be the solutions of \eqref{eq:PDEellweak} and \eqref{eq:PDEellOD}, respectively. Then
\begin{equation}\label{eq:errODp}
\|\nabla(u - \uC)\|_{L^2(D)} \lesssim \frac{\Phi(p,s)}{p}\,H^{s+1}\, |f|_{H^s(\tri_H)}
\end{equation}
and
\begin{equation}\label{eq:errODpL2}
\|u - \uC\|_{L^2(D)} \lesssim \frac{\Phi(p,s)}{p^2}\,H^{s+2}\, |f|_{H^s(\tri_H)},
\end{equation}
with the notation $H^0(\tri_H) := L^2(D)$ and $|\cdot|_{H^0(\tri_H)}:= \|\cdot\|_{L^2(D)}$.
\end{theorem}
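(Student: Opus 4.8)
The plan is to use the standard LOD-type duality argument, exploiting two facts: (i) the ideal method \eqref{eq:PDEellOD} is the Galerkin projection of \eqref{eq:PDEellweak} onto the conforming subspace $\tVHp \subseteq \Ve$, so Céa's lemma gives quasi-optimality in the energy norm, and (ii) the defining property of $\calR$ — that $\Pi(\calR \vH) = \vH$ and $\calR\vH$ is $a$-orthogonal to $\Ker\Pi|_{\scalebox{.65}{$\Ve$}}$ — converts the problem of approximating $u$ into one of approximating $f$ by the $L^2$-projection $\Pi$. Concretely, I would first observe that $\uC = \calR(\Pi u)$: indeed, writing $w := u - \calR(\Pi u)$, the first line of the saddle-point system \eqref{eq:saddlepoint} says $a(\calR(\Pi u), v) = -(\lambda, v)_{L^2(D)}$ for a multiplier $\lambda \in \VHp$ and all $v \in \Ve$, while $\Pi w = \Pi u - \Pi u = 0$. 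Hence for any $\vC \in \tVHp$ one has $a(\calR(\Pi u), \vC) = -(\lambda, \vC)_{L^2(D)} = -(\lambda, \Pi\vC)_{L^2(D)}$; but also from \eqref{eq:PDEellweak}, $a(u,\vC) = (f,\vC)_{L^2(D)}$. Testing the \emph{continuous} problem against the corrector of $\Pi\vC$ and using that $\Pi$ and $\calR$ are mutually inverse on $\tVHp$ and $\VHp$, one checks that $\calR(\Pi u)$ solves \eqref{eq:PDEellOD}; by uniqueness, $\uC = \calR(\Pi u)$. Therefore $u - \uC = (u - \calR(\Pi u))$ and $\Pi(u - \uC) = 0$, i.e. the error lies in $\Ker\Pi|_{\scalebox{.65}{$\Ve$}}$.

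Next I would estimate the energy norm. Since $u - \uC \in \Ker\Pi$ and $\uC \in \tVHp$ is $a$-orthogonal to $\Ker\Pi$ by construction, we actually have $a(u-\uC, u-\uC) = a(u, u-\uC)$, and using \eqref{eq:PDEellweak} with $v = u - \uC$ together with $\Pi(u-\uC)=0$,
\begin{equation*}
\alpha \|\nabla(u-\uC)\|_{L^2(D)}^2 \leq a(u-\uC, u-\uC) = (f, u - \uC)_{L^2(D)} = (f - \Pi f, u - \uC)_{L^2(D)} = ((\opid-\Pi)f, (\opid-\Pi)(u-\uC))_{L^2(D)},
\end{equation*}
where the last step uses self-adjointness and idempotency of $\Pi$. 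Now apply the element-wise approximation bound \eqref{eq:Piapproxk} to $f$ (with exponent $s = \min\{k,p+1\}$) and \eqref{eq:Piapprox} to $u - \uC$, sum over $K \in \tri_H$, and use Cauchy–Schwarz:
\begin{equation*}
\alpha \|\nabla(u-\uC)\|_{L^2(D)}^2 \lesssim \Phi(p,s)\, H^s\, |f|_{H^s(\tri_H)} \cdot \frac{H}{p}\, \|\nabla(u-\uC)\|_{L^2(D)},
\end{equation*}
which after dividing yields \eqref{eq:errODp}. For the $L^2$-estimate \eqref{eq:errODpL2}, I would run the Aubin–Nitsche trick: let $z \in \Ve$ solve $a(v,z) = (u-\uC, v)_{L^2(D)}$ for all $v \in \Ve$, set $\zC := \calR(\Pi z) \in \tVHp$, note $\Pi(z - \zC) = 0$, and write
\begin{equation*}
\|u-\uC\|_{L^2(D)}^2 = a(u-\uC, z) = a(u-\uC, z - \zC) + a(u-\uC, \zC).
\end{equation*}
The second term vanishes because $u - \uC \in \Ker\Pi$ while $\zC \in \tVHp$ is $a$-orthogonal to $\Ker\Pi$; for the first term, proceed exactly as in the energy estimate but now using $(\opid-\Pi)$-orthogonality on \emph{both} factors. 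One factor gives $\Phi(p,s)H^s|f|_{H^s(\tri_H)}$ via the bound already derived for $u - \uC$ (or more directly from the identity $a(u-\uC, z-\zC) = ((\opid-\Pi)f, (\opid-\Pi)z)_{L^2(D)}$ after using $\Pi(z-\zC)=0$ and the PDE for $u$), the other factor gives $\tfrac{H}{p}\|\nabla z\|_{L^2(D)} \lesssim \tfrac{H}{p}\alpha^{-1}\|u-\uC\|_{L^2(D)}$ via \eqref{eq:Piapprox} and the dual stability bound \eqref{eq:stabuell}; dividing by $\|u-\uC\|_{L^2(D)}$ produces the extra factor $H/p$ and hence \eqref{eq:errODpL2}.

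The main obstacle is bookkeeping rather than conceptual: one must be careful that $\Phi(p,s)$ with $s = \min\{k,p+1\}$ is the correct exponent when $k > p+1$ (then $s = p+1$ and \eqref{eq:Piapproxk} is applied at the top available order), and that all constants genuinely stay independent of $H$ and $p$ — in particular the constant $C_\Pi$ in \eqref{eq:Piapprox}–\eqref{eq:Piapproxk} and the coercivity/continuity constants $\alpha,\beta$ are the only ones entering, with no dependence on the microscopic scale $\epsilon$ of $A$ anywhere, which is the whole point. A secondary subtlety is the clean justification that $\uC = \calR(\Pi u)$; this is where the surjectivity of $\Pi|_{\scalebox{.65}{$\Ve$}}$ (Theorem~\ref{t:surjectivePi}) is implicitly used, since it guarantees $\tVHp$ has full dimension $\dim\VHp$ and that $\calR$ is well-defined, so the Galerkin solution is unique and the identification is legitimate. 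Everything else is a routine combination of Céa/Aubin–Nitsche with the projection estimates stated in the excerpt.
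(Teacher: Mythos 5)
Your identification $\uC = \calR(\Pi u)$ and the resulting energy estimate are correct and follow essentially the same route as the paper, which isolates the identity $\Pi\uC = \Pi u$ as Lemma~\ref{l:equivform} and then runs exactly your chain $\alpha\,\|\nabla(u-\uC)\|^2_{L^2(D)} \le (f-\Pi f,\, u-\uC)_{L^2(D)} \lesssim \Phi(p,s)\,H^s\,|f|_{H^s(\tri_H)}\cdot\tfrac{H}{p}\,\|\nabla(u-\uC)\|_{L^2(D)}$ (with the separate, trivial treatment of $k=0$).

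The $L^2$ bound is where your argument as written does not deliver \eqref{eq:errODpL2}. The two factors you produce in the duality step --- $\Phi(p,s)\,H^s\,|f|_{H^s(\tri_H)}$ from $\|(\opid-\Pi)f\|_{L^2(D)}$, and $\tfrac{H}{p}\,\|\nabla z\|_{L^2(D)}\lesssim \tfrac{H}{p}\,\alpha^{-1}\|u-\uC\|_{L^2(D)}$ from the \emph{stability} of the dual problem --- multiply to $\tfrac{\Phi(p,s)}{p}\,H^{s+1}\,|f|_{H^s(\tri_H)}\,\|u-\uC\|_{L^2(D)}$, so after dividing you only recover the energy-norm rate $H^{s+1}/p$, one factor of $H/p$ short of the claimed $H^{s+2}/p^2$. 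To close the count by duality you would have to use the \emph{Galerkin error} of the dual problem, $\|\nabla(z-\zC)\|_{L^2(D)}\lesssim \tfrac{H}{p}\,\|u-\uC\|_{L^2(D)}$ (i.e.\ apply \eqref{eq:errODp} with $k=0$ to the data $u-\uC$), not merely $\|\nabla z\|_{L^2(D)}\le\alpha^{-1}\|u-\uC\|_{L^2(D)}$; then $\|z-\zC\|_{L^2(D)}\le C_\Pi\tfrac{H}{p}\,\|\nabla(z-\zC)\|_{L^2(D)}\lesssim \tfrac{H^2}{p^2}\,\|u-\uC\|_{L^2(D)}$ supplies the missing power. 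The paper avoids duality altogether: since $\Pi(u-\uC)=0$, estimate \eqref{eq:Piapprox} applied element-wise to $u-\uC$ gives $\|u-\uC\|_{L^2(D)}\le C_\Pi\tfrac{H}{p}\,\|\nabla(u-\uC)\|_{L^2(D)}$ directly, and \eqref{eq:errODpL2} follows from \eqref{eq:errODp} in one line. You already have everything needed for this shortcut, since you established $u-\uC\in\Ker\Pi$.
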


In order to prove Theorem~\ref{t:errODp}, we need the following useful result. 

\begin{lemma}[Equal projections]\label{l:equivform}
Let $u \in \Ve$ be the solution of \eqref{eq:PDEellweak} and $\uC \in \tVHp$ the solution of \eqref{eq:PDEellOD}. Then, we have that
\begin{equation*}
\Pi \uC = \Pi u.
\end{equation*}
\end{lemma}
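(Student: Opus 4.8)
The plan is to exploit the variational characterization of $\uC=\calR\uHo$ for a suitable $\uHo\in\VHp$, and to test the equations \eqref{eq:PDEellweak} and \eqref{eq:PDEellOD} against carefully chosen functions. The key observation is that for any $\wH\in\VHp$, the corrector $\calR\wH\in\tVHp$ is admissible as a test function in \eqref{eq:PDEellOD}, and by the defining saddle point problem \eqref{eq:saddlepoint} we have, for all $v\in\Ve$,
\begin{equation*}
a(\calR\wH,v)=-(\lambda_{\wH},v)_{L^2(D)},
\end{equation*}
where $\lambda_{\wH}\in\VHp$. In particular, taking $v=\uC$ and $v=u$ and subtracting yields $a(\calR\wH,u-\uC)=-(\lambda_{\wH},u-\uC)_{L^2(D)}=-(\lambda_{\wH},\Pi(u-\uC))_{L^2(D)}$, since $\lambda_{\wH}\in\VHp$ and $\Pi$ is the $L^2$-projection onto $\VHp$.

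First I would write $e:=u-\uC\in\Ve$ and note that we want to show $\Pi e=0$, i.e. $(e,q)_{L^2(D)}=0$ for all $q\in\VHp$. Since $\Pi$ restricted to $\Ve$ is surjective onto $\VHp$, it suffices to show $(e,\Pi w)_{L^2(D)}=0$ for $w$ ranging over a set whose projections span $\VHp$; equivalently, pick any $q\in\VHp$ and let $z_q:=\calR q\in\tVHp$, so that $\Pi z_q=q$ and hence $(e,q)_{L^2(D)}=(e,\Pi z_q)_{L^2(D)}=(\Pi e, q)_{L^2(D)}$. Now from the saddle point relation with $\wH=q$ and $v=e$,
\begin{equation*}
a(z_q,e)=-(\lambda_q,e)_{L^2(D)}=-(\lambda_q,\Pi e)_{L^2(D)}.
\end{equation*}
On the other hand, $z_q\in\tVHp$ is a legitimate test function in \eqref{eq:PDEellOD}, and $z_q\in\Ve$ is a legitimate test function in \eqref{eq:PDEellweak}, so $a(u,z_q)=(f,z_q)_{L^2(D)}=a(\uC,z_q)$, i.e. $a(z_q,e)=0$. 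Combining the two displays gives $(\lambda_q,\Pi e)_{L^2(D)}=0$ for every $q\in\VHp$.

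The final step is to conclude from $(\lambda_q,\Pi e)_{L^2(D)}=0$ for all $q\in\VHp$ that $\Pi e=0$. Here I would show that the map $q\mapsto\lambda_q$ is surjective onto $\VHp$ (or at least that $\{\lambda_q : q\in\VHp\}$ spans $\VHp$): indeed, the second block of \eqref{eq:saddlepoint} says $\Pi(\calR q)=q$ depends bijectively on $q$, and the first block expresses $\lambda_q$ as (minus) the Riesz-type representative in $\VHp$ of the functional $v\mapsto a(\calR q,v)$; if $\lambda_q=0$ then $\calR q$ is $a$-orthogonal to all of $\Ve$, forcing $\calR q=0$ and thus $q=\Pi(\calR q)=0$. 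Hence $q\mapsto\lambda_q$ is an injective linear endomorphism of the finite-dimensional space $\VHp$, therefore surjective. Choosing $q$ with $\lambda_q=\Pi e$ then gives $\|\Pi e\|_{L^2(D)}^2=0$, i.e. $\Pi\uC=\Pi u$.

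The routine parts are the testing manipulations; the only point requiring a little care is the surjectivity of $q\mapsto\lambda_q$, which I expect to be the main (though minor) obstacle — it hinges on the well-posedness of \eqref{eq:saddlepoint}, i.e. on the surjectivity of $\Pi\vert_{\scalebox{.65}{$\Ve$}}$ established in Section~\ref{ss:infsup}, together with the injectivity argument above. Alternatively, one can avoid discussing surjectivity of $q\mapsto\lambda_q$ altogether: since $\lambda_q$ is by definition the Lagrange multiplier enforcing $\Pi(\calR q)=q$, one directly has $(\lambda_q,\Pi e)_{L^2(D)}=(\lambda_q, q')_{L^2(D)}$ for the specific choice making $\calR$ interact with $e$; but the cleanest route remains the dimension-counting argument just sketched.
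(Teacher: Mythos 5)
Your argument is correct, but it takes a genuinely different route from the paper. The paper proves the stronger-looking (in fact equivalent) identity $\uC = \calR\Pi u$ directly: it verifies that $\calR\Pi u$ satisfies the Galerkin equations \eqref{eq:PDEellOD} by writing $a(\calR\Pi u,\vC) = a(u,\vC) - a((\opid-\calR\Pi)u,\vC)$ and observing that the second term vanishes because $\Pi(\opid-\calR\Pi)u = 0$ and every $\vC\in\tVHp$ is $a$-orthogonal to $\Ker\Pi\cap\Ve$ by \eqref{eq:saddlepoint}; uniqueness of the Galerkin solution and the constraint in \eqref{eq:saddlepoint} then give $\Pi\uC=\Pi u$ immediately. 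You instead combine Galerkin orthogonality $a(\calR q,u-\uC)=0$ with the multiplier equation $a(\calR q,v)=-(\lambda_q,v)_{L^2(D)}$ to obtain $(\lambda_q,\Pi(u-\uC))_{L^2(D)}=0$ for all $q\in\VHp$, and then close the argument by showing that $q\mapsto\lambda_q$ is an injective (hence, by finite dimensionality, surjective) linear endomorphism of $\VHp$ — the injectivity via coercivity of $a$ is sound, and the linearity and uniqueness of $\lambda_q$ are covered by the inf-sup condition of Section~\ref{ss:infsup}, which you correctly flag as the underlying well-posedness ingredient. Both proofs exploit the same structural fact (the first block of \eqref{eq:saddlepoint}); the paper's version is shorter and yields the explicit representation $\uC=\calR\Pi u$ as a byproduct, while yours isolates the role of the Lagrange multiplier at the cost of the extra dimension-counting lemma. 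Your vaguer ``alternative'' in the last paragraph that tries to bypass the surjectivity of $q\mapsto\lambda_q$ does not work as stated, but since you do not rely on it, the proof stands.
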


\begin{proof}
We show that $\uC =\calR \Pi u$ or, equivalently, that $\uC$ solves
\begin{equation}\label{eq:saddlepointu}
\begin{aligned}
a(\uC, v)\qquad\, &+& (\lambda_{\Pi u}, v)_{L^2(D)} \quad&=\quad 0,\\
(\Pi\tilde{u}_H,\muH)_{L^2(D)} && &=\quad (\Pi u, \muH)_{L^2(D)}
\end{aligned}
\end{equation}
for all $v\in \Ve$ and $\muH \in\VHp$, where $\lambda_{\Pi u} \in \VHp$ is the associated Lagrange multiplier. The assertion then follows by the second line of \eqref{eq:saddlepointu}.

For $\vC = \calR \vH \in \tVHp$, we compute 
\begin{equation*}
\begin{aligned}
a(\calR\Pi u, \vC) &= a(u,\vC) - a((\opid-\calR\Pi)u,\vC)\\
& = (f,\vC)_{L^2(D)} - a((\opid-\calR\Pi)u,\vC).
\end{aligned}
\end{equation*}
Since $\Pi (\opid-\calR\Pi)u  = 0$, we get with \eqref{eq:saddlepoint} that 
\begin{equation*}
a((\opid-\calR\Pi)u,\vC) = 0.
\end{equation*}
Therefore, $\calR\Pi u$ is the unique solution of problem \eqref{eq:PDEellOD}. 
\end{proof}

\begin{proof}[Proof of Theorem~\ref{t:errODp}]
Using the Galerkin orthogonality, \eqref{eq:Piapprox}, and \eqref{eq:Piapproxk}, we obtain for $k \geq 1$ 
\begin{equation*}
\begin{aligned}
\alpha\,\|\nabla(u - \uC)\|^2_{L^2(D)} &\leq a(u - \uC,u - \uC) = a(u,u - \uC)\\
& = (f, u - \uC)_{L^2(D)} =
(f - \Pi f, u - \uC)_{L^2(D)} \\
& \leq \|f - \Pi f\|_{L^2(D)}\,C_{\Pi}\,\frac{H}{p}\,\|\nabla(u - \uC)\|_{L^2(D)}\\
& \leq C_{\Pi}\,\Phi(p,s)\,H^s\, |f|_{H^s(\tri_H)}\,C_{\Pi}\,\frac{H}{p}\,\|\nabla(u - \uC)\|_{L^2(D)},
\end{aligned}
\end{equation*}
where we employ that $\Pi (u - \uC) = 0$ by Lemma~\ref{l:equivform}. Thus, 
\begin{equation*}
\|\nabla(u - \uC)\|_{L^2(D)} \leq \alpha^{-1}C_{\Pi}^2\,\frac{\Phi(p,s)}{p}\,H^{s+1}\, |f|_{H^s(\tri_H)}.
\end{equation*}
With the same arguments but without inserting $\Pi f$, we get in the case $k = 0$ that
\begin{equation*}
\|\nabla(u - \uC)\|_{L^2(D)} \leq \alpha^{-1}C_{\Pi}\,\frac{H}{p}\, \|f\|_{L^2(D)}.
\end{equation*}
This proves \eqref{eq:errODp}. 
To show the $L^2$-error estimate, we use once again that ${\Pi (u - \uC) = 0}$. 
Therefore, we get with \eqref{eq:Piapprox} that
\begin{equation*}
\|u - \uC\|_{L^2(D)} \leq C_{\Pi}\,\frac{H}{p}\,\|\nabla(u-\uC)\|_{L^2(D)}.
\end{equation*}
Combining the last estimate with \eqref{eq:errODp}, we deduce \eqref{eq:errODpL2}. 
\end{proof}

\begin{remark}
If $k=0$ and $p = 1$, the error estimate in Theorem~\ref{t:errODp} is comparable to the error estimates of the conforming (ideal) LOD method; see, e.g.,~\cite{MalP14}.
\end{remark}

\subsection{Surjectivity of $\Pi$}\label{ss:infsup}

This subsection is devoted to showing that the projection operator~$\Pi$ is surjective when restricted to functions in $\Ve$. This property is an important requirement in the above construction and is proved for completeness. First, we show the following auxiliary lemma. 
\begin{lemma}[Local inf-sup condition]\label{l:infsup}
Let $K \in \tri_H$. Then the inf-sup condition
\begin{equation}\label{eq:infsupL2}
\adjustlimits\inf_{q \in \VHp(K)}\sup_{v \in H^1_0(K)} \frac{(q,v)_{L^2(K)}}{\|q\|_{L^2(K)}\,\|\nabla v\|_{L^2(K)}} \geq \gamma(H,p) > 0
\end{equation}
holds with $\gamma(H,p) \sim Hp^{-2}$. 
\end{lemma}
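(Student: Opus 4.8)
The plan is to prove the inf-sup condition directly by exhibiting, for each $q \in \VHp(K)$, a good test function $v \in H^1_0(K)$. First I would reduce to the reference element $\hat K = (0,1)^d$ by affine scaling: if $K$ has diameter $\sim H$, then pulling back a polynomial $q$ and a function $v$ via the affine map $F_K\colon \hat K \to K$ turns the $L^2$-inner product and the gradient norm into their reference counterparts with explicit powers of $H$. A short bookkeeping of these powers shows that a reference inf-sup constant $\hat\gamma(p)$ translates into $\gamma(H,p) \sim H\,\hat\gamma(p)$, so the task is to establish $\hat\gamma(p) \sim p^{-2}$ on the fixed domain $\hat K$.

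On the reference element, the natural candidate is the ``bubble-weighted'' test function $v = b\,q$, where $b(\hat x) := \prod_{i=1}^d \hat x_i(1-\hat x_i)$ is the standard interior bubble vanishing on $\partial\hat K$; then $v \in H^1_0(\hat K)$ and $(q,v)_{L^2(\hat K)} = \int_{\hat K} b\,q^2 \geq c\,\|q\|^2_{L^2(\hat K)}$ by the norm equivalence (on the finite-dimensional space $\VHp(\hat K)$) between $\|q\|_{L^2}$ and $(\int b\,q^2)^{1/2}$ — but this equivalence constant $c$ degrades with $p$, which is precisely where the $p^{-2}$ enters. To control it one uses the explicit one-dimensional fact that for Legendre expansions the weight $\hat x(1-\hat x)$ satisfies $\int_0^1 \hat x(1-\hat x)\,q^2\,\ds \gtrsim p^{-2}\|q\|_{L^2(0,1)}^2$ for polynomials of degree $\leq p$ (this is classical in the $hp$-FEM literature, e.g. related to the sharp constants in the trace and bubble inequalities used in \cite{Sch98,GraHS05,Geo08}); tensorizing over the $d$ coordinates gives $\int_{\hat K} b\,q^2 \gtrsim p^{-2d}\|q\|^2_{L^2(\hat K)}$. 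For the denominator, $\|\nabla v\|_{L^2(\hat K)} = \|\nabla(bq)\|_{L^2(\hat K)} \lesssim \|q\|_{L^2(\hat K)} + \|b\nabla q\|_{L^2(\hat K)} \lesssim \|q\|_{L^2(\hat K)}$, where the last step uses $|b|\le 1$ on $\hat K$ together with the inverse inequality \eqref{eq:invineq} to absorb $\|\nabla q\|_{L^2(\hat K)} \lesssim p^2\|q\|_{L^2(\hat K)}$; this contributes a factor $p^2$. Combining numerator and denominator yields $\hat\gamma(p) \gtrsim p^{-2d}/p^{2} = p^{-2d-2}$, which is $p^{-4}$ in $d=1$ — not yet the claimed $p^{-2}$.

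To get the sharp rate $\gamma(H,p)\sim Hp^{-2}$ one should not weight $q$ by the full bubble but choose $v$ more carefully: the right object is (a polynomial approximation of) the $H^1_0(\hat K)$-Riesz representative of the functional $w \mapsto (q,w)_{L^2(\hat K)}$, i.e. solve $-\Delta v = q$ in $\hat K$, $v=0$ on $\partial\hat K$. For this $v$ one has exactly $(q,v)_{L^2} = \|\nabla v\|_{L^2}^2$, so the quotient equals $\|\nabla v\|_{L^2}/\|q\|_{L^2} = \|q\|_{H^{-1}(\hat K)}/\|q\|_{L^2(\hat K)}$, and the inf-sup constant is $\inf_{q\in\VHp(\hat K)} \|q\|_{H^{-1}(\hat K)}/\|q\|_{L^2(\hat K)}$; the $p^{-1}$ scaling of this ratio on polynomial spaces (each factor of $\Delta^{-1}$ gains essentially one power of $p^{-1}$ on polynomials, again a Legendre/Jacobi computation) gives $\hat\gamma(p)\sim p^{-1}$ per dimension after the tensor product is handled, hence $\hat\gamma(p)\sim p^{-2}$ overall, and the affine scaling then gives $\gamma(H,p)\sim Hp^{-2}$. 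The matching upper bound $\gamma(H,p)\lesssim Hp^{-2}$ is obtained by testing the inf-sup quotient against the highest-degree Legendre polynomial $q$ on $K$, for which $\|\nabla v\|_{L^2}\le H^{-1}p^2\|q\|_{L^2}$ forces any admissible $v$ to have large gradient relative to $(q,v)_{L^2}$.

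The main obstacle is obtaining the \emph{sharp} power of $p$: the crude bubble argument is elementary but loses powers of $p$, while the sharp constant requires the explicit spectral behaviour of $-\Delta$ (or the weighted mass matrix) on polynomial spaces under the $L^2$-to-$H^{-1}$ comparison, i.e. the Legendre/Jacobi estimates that are the technical heart of $hp$-FEM analysis. I expect the author to quote such an estimate from the references \cite{Sch98,GraHS05,Geo08} rather than rederive it.
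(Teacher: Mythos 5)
Your diagnosis of the naive polynomial-bubble argument is accurate---weighting by $\prod_i \hat x_i(1-\hat x_i)$ does lose powers of $p$ exactly as you compute---but the route you then propose to recover the sharp rate has a genuine gap. Observing that $\sup_{v\in H^1_0(K)}(q,v)_{L^2(K)}/\|\nabla v\|_{L^2(K)}=\|q\|_{H^{-1}(K)}$ merely restates the lemma as the claim $\|q\|_{H^{-1}(K)}\gtrsim Hp^{-2}\|q\|_{L^2(K)}$ for $q\in\VHp(K)$; the justification you offer for it (``each factor of $\Delta^{-1}$ gains one power of $p^{-1}$ \dots\ per dimension, hence $p^{-2}$ overall'') is not a proof and is moreover dimensionally inconsistent---the claimed constant $Hp^{-2}$ is independent of $d\in\{1,2,3\}$, whereas ``$p^{-1}$ per dimension'' would give $p^{-d}$. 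The needed $H^{-1}$-to-$L^2$ comparison for polynomial spaces is not available in the cited $hp$ references in the form you would need to quote, so the central estimate remains unproven.

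The idea you are missing, and which the paper uses, is to replace the polynomial bubble by a \emph{Lipschitz} cutoff $\rho\in W^{1,\infty}(K)\cap H^1_0(K)$ that is identically $1$ on a subcube $\kappa\subseteq K$ with $\dist(\kappa,\partial K)\sim Hp^{-2}$ and satisfies $\|\nabla\rho\|_{L^\infty(K)}\lesssim H^{-1}p^2$. The key input (from \cite{Geo08}, Lem.~3.7) is that every $q\in\VHp(K)$ retains a fixed fraction of its $L^2$ mass on such a $\kappa$, namely $\|q\|^2_{L^2(\kappa)}\geq\tfrac14\|q\|^2_{L^2(K)}$. Then $v=\rho q$ gives a numerator $(q,\rho q)_{L^2(K)}\geq\tfrac14\|q\|^2_{L^2(K)}$ with a $p$-independent constant, while the product rule together with the inverse inequality \eqref{eq:invineq} bounds the denominator by $\|\nabla(\rho q)\|_{L^2(K)}\lesssim H^{-1}p^2\,\|q\|_{L^2(K)}$, both terms of the product rule contributing the same $H^{-1}p^2$. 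This yields $\gamma(H,p)\gtrsim Hp^{-2}$ in one line, entirely elementarily and uniformly in $d$, and is precisely the step your proposal lacks.
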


\begin{proof}
Let $\kappa \subseteq K$ be such that the edges and faces of $\kappa$ are parallel to the ones of~$K$. According to \cite[Lem.~3.7]{Geo08}, there exists a choice of $\kappa$ such that $\dist(\kappa,\partial K) = C_{\dist}\,Hp^{-2}$ and
\begin{equation}\label{eq:pol}
\| q \|^2_{L^2(\kappa)} \geq \frac{1}{4}\| q \|^2_{L^2(K)}
\end{equation}
for all $q \in \VHp(K)$, where $\dist(\cdot,\cdot)$ denotes the Hausdorff distance. 
As a next step, let ${\rho \in W^{1,\infty}(K)\cap H^1_0(K)}$ be a bubble function with 
\begin{equation*}
\begin{aligned}
&0 \,\leq \,\rho\,\leq\,1,\\
&\rho\,\equiv\,1\quad\text{in }\kappa,\\
&\|\nabla\rho\|_{L^\infty(K)} \,\leq \,C_{\rho}\,H^{-1}p^2,
\end{aligned}
\end{equation*}
where $C_{\rho}$ depends on $C_{\dist}$. Using \eqref{eq:pol} and 
\begin{equation}\label{eq:pol2}
\begin{aligned}
\|\nabla(\rho q)\|_{L^2(K)} &\leq \|\nabla\rho\|_{L^\infty(K)}\,\|q\|_{L^2(K)} + \|\rho\|_{L^\infty(K)}\,\|\nabla q\|_{L^2(K)}\\
&\leq H^{-1}p^2(C_\rho + C_\mathrm{inv})\,\|q\|_{L^2(K)}, 
\end{aligned}
\end{equation}
we get for any $q \in \VHp(K)$ that  
\begin{align*}
\sup_{v \in H^1_0(K)} \frac{(q,v)_{L^2(K)}}{\|q\|_{L^2(K)}\,\|\nabla v\|_{L^2(K)}} &\geq 
\frac{(q,\rho q)_{L^2(K)}}{\|q\|_{L^2(K)}\,\|\nabla(\rho q)\|_{L^2(K)}}\\
&\geq \frac{1}{4}\frac{\|q\|^2_{L^2(K)}}{\|q\|_{L^2(K)}\,\|\nabla(\rho q)\|_{L^2(K)}}\\
&= \frac{H}{4p^2\,(C_\rho + C_\mathrm{inv})} =: \gamma(H,p) > 0. \qedhere
\end{align*}
\end{proof}

\begin{theorem}[Surjectivity]\label{t:surjectivePi}
The restricted operator ${\Pi\vert}_{\scalebox{.65}{$\Ve$}}$ is surjective, i.e., for any $\wH \in \VHp$, there exists a function $w \in \Ve$ such that $\Pi w = \wH$. Further, among all possible candidates exists a choice of $w$ such that
\begin{equation*}
\|\nabla w\|_{L^2(D)} \lesssim \frac{p^2}{H}\,\|\wH\|_{L^2(D)}.
\end{equation*}
\end{theorem}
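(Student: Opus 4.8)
The plan is to prove surjectivity of $\Pi|_{\scalebox{.65}{$\Ve$}}$ by a standard ``construct a right inverse'' argument: given $\wH \in \VHp$, I would build, element by element, a function $w \in \Ve$ supported on the union of the elements with $\Pi w = \wH$ and with a controlled $H^1$-norm. The crucial point is that because $\VHp$ is \emph{discontinuous} and $\Pi$ is purely local (cf.~\eqref{eq:defL2projloc}), it suffices to solve the problem independently on each $K \in \tri_H$ \emph{using test/correction functions that vanish on $\partial K$}; gluing these local functions by zero extension then yields a global $w \in H^1_0(D)$ with no continuity constraints to worry about. So the whole theorem reduces to a local statement on a single element $K$: for every $q_K \in \VHp(K)$ there is $w_K \in H^1_0(K)$ with $(w_K, \mu)_{L^2(K)} = (q_K,\mu)_{L^2(K)}$ for all $\mu \in \VHp(K)$ and $\|\nabla w_K\|_{L^2(K)} \lesssim \tfrac{p^2}{H}\|q_K\|_{L^2(K)}$.

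The key steps, in order, are the following. First I would reformulate the local existence problem as a local saddle-point / projection problem: seek $w_K \in H^1_0(K)$ minimizing $\|\nabla w_K\|_{L^2(K)}^2$ subject to the linear constraint $\Pi_K w_K = q_K$ (where $\Pi_K$ is the local $L^2$-projection onto $\VHp(K)$). This is a constrained quadratic minimization over a Hilbert space with finitely many linear constraints, so existence and uniqueness of the minimizer — equivalently, solvability of the associated KKT system with a Lagrange multiplier $\lambda_K \in \VHp(K)$ — is guaranteed \emph{precisely} by the local inf-sup condition \eqref{eq:infsupL2} of Lemma~\ref{l:infsup} (this is the classical Babu\v{s}ka--Brezzi theory). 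Second, the same inf-sup constant controls the norm of the solution operator: standard saddle-point estimates give $\|\nabla w_K\|_{L^2(K)} \le \gamma(H,p)^{-1}\,\|q_K\|_{L^2(K)}$, and since Lemma~\ref{l:infsup} provides $\gamma(H,p) \sim Hp^{-2}$, this is exactly the claimed bound $\|\nabla w_K\|_{L^2(K)} \lesssim \tfrac{p^2}{H}\|q_K\|_{L^2(K)}$. Third, I would set $w := \sum_{K \in \tri_H} \tilde w_K$ where $\tilde w_K$ is the extension of $w_K$ by zero to all of $D$; since each $w_K \in H^1_0(K)$, the extension lies in $H^1_0(D)$, and the pieces have disjoint interiors, so $\Pi w = \wH$ by locality of $\Pi$ and $\|\nabla w\|_{L^2(D)}^2 = \sum_K \|\nabla w_K\|_{L^2(K)}^2 \lesssim \tfrac{p^4}{H^2}\sum_K \|\wH\|_{L^2(K)}^2 = \tfrac{p^4}{H^2}\|\wH\|_{L^2(D)}^2$, which after taking square roots is the asserted global estimate.

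Alternatively — and perhaps more directly, avoiding the saddle-point machinery — one can make the bubble function $\rho$ from the proof of Lemma~\ref{l:infsup} do the work explicitly. On each $K$, consider the map $T_K \colon \VHp(K) \to \VHp(K)$, $T_K q := \Pi_K(\rho q)$; the inf-sup computation in Lemma~\ref{l:infsup} shows $(q, T_K q)_{L^2(K)} = (q, \rho q)_{L^2(K)} \ge \tfrac14 \|q\|_{L^2(K)}^2$, so $T_K$ is an isomorphism of the finite-dimensional space $\VHp(K)$ with $\|T_K^{-1}\| \le 4$. Hence for the given $q_K$ one sets $\hat q_K := T_K^{-1} q_K$ and $w_K := \rho\, \hat q_K \in H^1_0(K)$, which satisfies $\Pi_K w_K = T_K \hat q_K = q_K$ and, using \eqref{eq:pol2}, $\|\nabla w_K\|_{L^2(K)} = \|\nabla(\rho \hat q_K)\|_{L^2(K)} \le H^{-1}p^2(C_\rho + C_{\mathrm{inv}})\|\hat q_K\|_{L^2(K)} \le 4 H^{-1}p^2(C_\rho+C_{\mathrm{inv}})\|q_K\|_{L^2(K)}$; then glue by zero extension as above.

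I expect the main (and really only) obstacle to be bookkeeping rather than conceptual: one must be careful that the zero-extension of an $H^1_0(K)$ function genuinely lands in $H^1_0(D)$ across element interfaces (true, since traces vanish) and that the locality characterization \eqref{eq:defL2projloc} legitimately reduces the global projection identity to the element-wise one — both are routine but should be stated. The genuinely substantive input, the $p$-explicit lower bound on the inf-sup constant (equivalently, the existence of the interior box $\kappa$ with $\dist(\kappa,\partial K)\sim Hp^{-2}$ and the bubble $\rho$ with $\|\nabla\rho\|_{L^\infty}\lesssim H^{-1}p^2$), has already been supplied by Lemma~\ref{l:infsup} and the cited result of Georgoulis, so no new hard estimate is needed here.
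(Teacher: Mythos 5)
Your proposal is correct, and both of your variants (the local saddle-point argument and the explicit bubble construction $w_K = \rho\, T_K^{-1} q_K$) are sound; in particular the coercivity bound $(q,T_Kq)_{L^2(K)} \ge \tfrac14\|q\|_{L^2(K)}^2$ does give invertibility of $T_K$ on the finite-dimensional space $\VHp(K)$ with $\|T_K^{-1}\|\le 4$, and the zero-extension/gluing step is legitimate because $\Pi$ acts element-wise by \eqref{eq:defL2projloc} and the gradients of the pieces have disjoint supports. The route is genuinely different from the paper's, though it rests on the same substantive input (Lemma~\ref{l:infsup} and the bubble $\rho$). The paper does not localize: it defines $w$ as the solution of a single \emph{global} saddle-point problem with the PDE bilinear form $a$ in place of the Dirichlet form, proves the \emph{global} inf-sup condition \eqref{eq:infsupL2global} by assembling the local test functions $v_K\in H^1_0(K)$, and then reads off existence and the bound $\|\nabla w\|_{L^2(D)} \le 2\beta^{1/2}\alpha^{-1/2}\tilde\gamma(H,p)^{-1}\|\wH\|_{L^2(D)}$ from \cite[Cor.~4.2.1]{BofBF13}. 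What the paper's route buys is a canonical, energy-minimal preimage consistent with the definition of $\calR$ in \eqref{eq:saddlepoint}, together with a stability bound on the Lagrange multiplier; the price is that the resulting $w$ is globally supported and the constant carries $\alpha$ and $\beta$. Your route buys a completely $A$-independent constant, avoids the global Babu\v{s}ka--Brezzi machinery (your second variant avoids it altogether), and — because your $w$ is by construction supported exactly on the elements where $\wH$ is nonzero — it delivers Corollary~\ref{c:locbubble} immediately, whereas the paper has to appeal back to the locality of its inf-sup construction to justify that corollary.
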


\begin{proof}
Let $\wH \in \VHp$. We define $w\in\Ve$ as the solution of
\begin{equation}\label{eq:defbubble}
\begin{aligned}
a(w, v)\qquad\, &+& (\lambda_{\wH}, v)_{L^2(D)} \quad&=\quad 0,\\
(w,\muH)_{L^2(D)} && &=\quad (\wH, \muH)_{L^2(D)}
\end{aligned}
\end{equation}
for all $v \in \Ve$ and all $\muH \in \VHp$.
From classical saddle point theory (see, e.g., \cite[Cor.~4.2.1]{BofBF13}), we know that \eqref{eq:defbubble} has a unique solution if the inf-sup condition
\begin{equation}\label{eq:infsupL2global}
\adjustlimits\inf_{\vH \in \VHp}\sup_{v \in H^1_0(D)} \frac{(\vH,v)_{L^2(D)}}{\|\vH\|_{L^2(D)}\,\|\nabla v\|_{L^2(D)}} \geq \tilde\gamma(H,p) > 0
\end{equation}
holds and $a$ is coercive. To show the inf-sup condition \eqref{eq:infsupL2global}, let $\vH \in \VHp$. From the construction in the proof of Lemma~\ref{l:infsup}, we get for any $K \in \tri_H$ the existence of a function $\vK \in H^1_0(K)$ which fulfills 
\begin{equation}\label{eq:est1}
(\vH,\vK)_{L^2(K)} \gtrsim \|\vH\|^2_{L^2(K)}
\end{equation}
and similarly to \eqref{eq:pol2} also
\begin{equation}\label{eq:est2}
\|\nabla\vK\|_{L^2(K)} \lesssim H^{-1}p^2\,\|\vH\|_{L^2(K)}. 
\end{equation}
Using these local contributions, the inclusion
\begin{equation*}
\bigcup_{K \in \tri_H} H^1_0(K) \subseteq H^1_0(D),
\end{equation*}
and the estimates \eqref{eq:est1} and \eqref{eq:est2}, we compute
\begin{equation*}
\begin{aligned}
\sup_{v \in H^1_0(D)} \frac{(\vH,v)_{L^2(D)}}{\|\vH\|_{L^2(D)}\,\|\nabla v\|_{L^2(D)}} &\geq \frac{\sum_{K \in \tri_H}(\vH,\vK)_{L^2(K)}}{\|\vH\|_{L^2(D)}\big(\sum_{K \in \tri_H}\|\nabla \vK\|^2_{L^2(K)}\big)^{1/2}}\\
&\geq C\,Hp^{-2}\frac{\sum_{K \in \tri_H}\|\vH\|^2_{L^2(K)}}{\|\vH\|_{L^2(D)}\big(\sum_{K \in \tri_H}\|\vH\|^2_{L^2(K)}\big)^{1/2}}
= C\,Hp^{-2}.
\end{aligned}
\end{equation*}
That is, the inf-sup condition \eqref{eq:infsupL2global} holds with ${\tilde\gamma(H,p) \sim Hp^{-2}} > 0$. Thus, \eqref{eq:defbubble} is well-posed and the stability estimates
\begin{equation*}
\|\lambda_{\wH}\|_{L^2(D)} \leq \frac{\beta}{\tilde\gamma(H,p)^2}\,\|\wH\|_{L^2(D)}
\end{equation*}
and
\begin{equation*}
\|\nabla w\|_{L^2(D)} \leq \frac{2\beta^{1/2}}{\alpha^{1/2}\tilde\gamma(H,p)}\,\|\wH\|_{L^2(D)}
\end{equation*}
hold (cf. \cite[Cor.~4.2.1]{BofBF13}). 
Finally, we remark that the equality $\Pi w = \wH$ follows by construction.
\end{proof}

The construction in the proof of Theorem~\ref{t:surjectivePi} is based on local subspaces of $\Ve$ and, thus, allows us to even find a conforming preimage $w\in\Ve$ under $\Pi$ of a function \mbox{$\wH \in \VHp$} which is supported only in the elements where $\wH$ is non-zero. This straightforward consequence is given in the following corollary.

\begin{corollary}[Local bubble function]\label{c:locbubble}
Let $\{K_j\}_{j = 1}^{n_R} \subseteq \calT_H$ be a set of elements and $\wH \in \VHp$ such that
\begin{equation*}
{\wH\vert}_{D\setminus R} = 0, \quad\text{ where }\quad R = \bigcup_{j=1}^{n_R} K_j.
\end{equation*} 
Then there exists a function $w \in H^1_0(R)$ with ${w\vert}_{D\setminus R} = 0$ such that $\Pi w = \wH$ and 
\begin{equation*}
\|\nabla w\|_{L^2(R)} \lesssim \frac{p^2}{H}\, \|\wH\|_{L^2(R)}.
\end{equation*}
\end{corollary}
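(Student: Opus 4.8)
The plan is to localize the construction from the proof of Theorem~\ref{t:surjectivePi} to the subdomain $R$. The key observation is that the inf-sup argument there was purely local: for each element $K_j$ one builds a function $v_{K_j}\in H^1_0(K_j)$ supported in $K_j$ with the bound \eqref{eq:est1}--\eqref{eq:est2}. Since $H^1_0(K_j)\subseteq H^1_0(R)$ whenever $K_j\subseteq R$, one obtains a local inf-sup condition over $H^1_0(R)$: for all $\vH\in\VHp$ with ${\vH\vert}_{D\setminus R}=0$,
\begin{equation*}
\adjustlimits\inf_{\substack{\vH\in\VHp\\ \vH\vert_{D\setminus R}=0}}\sup_{v\in H^1_0(R)}\frac{(\vH,v)_{L^2(R)}}{\|\vH\|_{L^2(R)}\,\|\nabla v\|_{L^2(R)}}\ \gtrsim\ Hp^{-2},
\end{equation*}
by exactly the same computation (summing the local contributions over $j=1,\dots,n_R$ instead of over all of $\tri_H$). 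The restriction of $a$ to $H^1_0(R)$ is of course still coercive.

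Next I would define $w\in H^1_0(R)$ as the solution of the saddle point problem \eqref{eq:defbubble} posed on $H^1_0(R)\times\{\vH\in\VHp:\vH\vert_{D\setminus R}=0\}$, i.e.\ find $(w,\lambda)\in H^1_0(R)\times\VHp|_R$ with $a(w,v)+(\lambda,v)_{L^2(R)}=0$ for all $v\in H^1_0(R)$ and $(w,\muH)_{L^2(R)}=(\wH,\muH)_{L^2(R)}$ for all admissible $\muH$. By the local inf-sup condition just established and classical saddle point theory \cite[Cor.~4.2.1]{BofBF13}, this problem is well-posed and the solution satisfies $\|\nabla w\|_{L^2(R)}\lesssim p^2 H^{-1}\|\wH\|_{L^2(R)}$.

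It remains to check that $\Pi w=\wH$ as an element of $\VHp$ on all of $D$. Extending $w$ by zero outside $R$ (which is legitimate since $w\in H^1_0(R)$, so the extension lies in $\Ve$), the element-wise characterization \eqref{eq:defL2projloc} gives, for $K\subseteq R$, that ${(\Pi w)\vert}_K={\wH\vert}_K$ by the second equation of the saddle point problem; and for $K\subseteq D\setminus R$, both $w$ and $\wH$ vanish on $K$, so ${(\Pi w)\vert}_K=0={\wH\vert}_K$. Hence $\Pi w=\wH$. The main (and only mild) obstacle is being careful that the test space for the Lagrange multiplier is restricted to functions vanishing outside $R$ — one cannot use all of $\VHp$, since $w$ supported in $R$ cannot match an arbitrary $\muH$ off $R$ — but with this restriction the inf-sup/coercivity pair carries over verbatim and the argument closes.
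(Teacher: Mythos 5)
Your proposal is correct and follows essentially the same route as the paper, which states the corollary as a direct consequence of the construction in the proof of Theorem~\ref{t:surjectivePi}: the inf-sup argument is built from element-local bubbles $v_K\in H^1_0(K)$, so restricting the test space to $H^1_0(R)$ and the multiplier space to $\{\vH\in\VHp:\vH\vert_{D\setminus R}=0\}$ reproduces the saddle point construction on $R$ with the same constant $\gamma\sim Hp^{-2}$. Your additional element-wise verification that $\Pi w=\wH$ also holds on elements outside $R$ (where both sides vanish) is the right way to close the argument.
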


\section{Derivation of a Practical Method}\label{s:practical}

In this section, we derive a fully practical version of the ideal method given in~\eqref{eq:PDEellOD} following the ideas of the classical LOD. The requirement of a practical version of the above method is related to the fact that the construction of the finite-dimensional approximation space $\tVHp$ involves the solution of infinite-dimensional problems. Therefore, we first investigate the decay properties of functions in $\tVHp$ which then allows us to suitable localize and discretize the construction of the space $\tVHp$.

\subsection{Decay of the basis functions}

As a first step, we identify a suitable choice of a basis of $\tVHp$ which is constructed from a basis of $\VHp$. For any $K \in \tri_H$, let 
\begin{equation*}
\frakB_K :=\{ \Lambda_{K,j} \}_{j = 1}^{\nK} \quad\text{ with }\quad \nK = (p+1)^d
\end{equation*}
be a basis of $\VHp(K)$ and
\begin{equation*}
\frakB := \bigcup_{K \in \tri_H} \frakB_K
\end{equation*}
the corresponding (local) basis of $\VHp$.  
In our numerical computations, we choose shifted Legendre polynomials on each element $K$, which are orthogonal with respect to the $L^2$-scalar product $(\cdot,\cdot)_{L^2(K)}$. 

Using the isomorphism $\calR$ between $\VHp$ and $\tVHp$, we directly get that $\tilde\frakB:=\calR\frakB$ is a basis of $\tVHp$. In the following, we show that for any basis function $\Lambda \in \frakB$, the corresponding basis function $\calR\Lambda \in \tilde\frakB$ decays exponentially fast away from the support of the function~$\Lambda$, which is exactly one element of $\tri_H$. 
To this end, we define for $\ell \in \N$ the \emph{element patch of order $\ell$} around $S \subseteq D$ by
\begin{equation*}
\Nb^\ell(S) := \Nb^1(\Nb^{\ell-1}(S)),\quad \ell \geq 1,\qquad\Nb^1(S) := \bigcup \bigl\{K \in \tri_H\,\colon\, \overline{S} \,\cap\, \overline{K}\neq \emptyset\bigl\}.
\end{equation*} 

\begin{theorem}[Decay of the basis functions]\label{t:decay}
Let $\ell \in \N$, $K \in \calT_H$, and $\Lambda \in \frakB_K$. Further, define $\tilde{\Lambda} = \calR\Lambda \in \tilde\frakB$. Then it holds that 
\begin{equation}\label{eq:decayLam}
\|\nabla \tilde\Lambda\|_{L^2(D\setminus \Nb^\ell(K))} \lesssim \exp(-C_\mathrm{dec}\, \ell/p) \,\|\nabla\tilde\Lambda\|_{L^2(D)}
\end{equation}
with a constant $C_\mathrm{dec}$ that depends on $C_\Pi$, $\alpha$, and $\beta$.
\end{theorem}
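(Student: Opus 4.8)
The plan is to follow the by-now standard exponential-decay argument for LOD-type constructions, adapted to track the polynomial degree $p$. The key object is a cutoff function $\eta$ adapted to the patches: fix $\ell$ and pick $\eta \in W^{1,\infty}(D)$ with $0 \le \eta \le 1$, $\eta \equiv 0$ on $\Nb^{\ell-1}(K)$ (or some inner layer), $\eta \equiv 1$ outside $\Nb^\ell(K)$, and $\|\nabla\eta\|_{L^\infty} \lesssim H^{-1}$ — but because we are aiming for decay rate $\ell/p$, I would instead spread the transition over a band of $O(p)$ element layers, so that $\|\nabla\eta\|_{L^\infty} \lesssim (Hp)^{-1}$; alternatively, use the inverse inequality \eqref{eq:invineq} and the $p^2/H$ bubble construction from Theorem~\ref{t:surjectivePi}/Corollary~\ref{c:locbubble} to absorb the $p$-dependence. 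Denote by $R_\ell := D \setminus \Nb^{\ell-1}(K)$ the region on which $\eta$ is supported and estimate $\|\nabla\tilde\Lambda\|_{L^2(D \setminus \Nb^\ell(K))}^2 \le \alpha^{-1} a(\tilde\Lambda, \eta^2 \tilde\Lambda)$ up to lower-order commutator terms.

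The main steps, in order: (i) Since $\tilde\Lambda = \calR\Lambda$ solves the saddle point problem \eqref{eq:saddlepoint} with $\Lambda$ supported only in $K$, for any test function $v \in \Ve$ with $\Pi v = 0$ we have $a(\tilde\Lambda, v) = -(\lambda_\Lambda, v)_{L^2(D)} = 0$; more generally $a(\tilde\Lambda, v) = 0$ whenever $\Pi v = 0$ and also the Lagrange multiplier term vanishes when tested against such $v$. So I want to test with $v = \eta^2 \tilde\Lambda - \calR\Pi(\eta^2\tilde\Lambda)$, a "corrected cutoff" that lies in $\Ve$ and is annihilated by $\Pi$; crucially, away from $K$, where $\Lambda = 0$, this correction is local and controllable. (ii) Expand $a(\tilde\Lambda, \eta^2\tilde\Lambda)$: the diagonal term gives $\int A\nabla\tilde\Lambda\cdot\nabla\tilde\Lambda\,\eta^2$, which bounds $\alpha\|\nabla\tilde\Lambda\|_{L^2(\{\eta=1\})}^2$ from below; the commutator term is $2\int A\nabla\tilde\Lambda\cdot\nabla\eta\,\eta\tilde\Lambda$, bounded by $\beta\|\nabla\eta\|_{L^\infty}\|\nabla\tilde\Lambda\|_{L^2(\mathrm{supp}\nabla\eta)}\|\tilde\Lambda\|_{L^2(\mathrm{supp}\nabla\eta)}$. (iii) Control $\|\tilde\Lambda\|_{L^2}$ on the transition band by $\|(\opid-\Pi)\tilde\Lambda\|_{L^2}$ plus $\|\Pi\tilde\Lambda\|_{L^2} = \|\Lambda\|_{L^2}$ restricted to the band — but the band is disjoint from $K$ for $\ell \ge 2$, so $\Pi\tilde\Lambda = \Lambda = 0$ there, and then \eqref{eq:Piapprox} gives $\|\tilde\Lambda\|_{L^2(\text{band})} \lesssim (H/p)\|\nabla\tilde\Lambda\|_{L^2(\text{band})}$. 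This is exactly where the favorable $H/p$ factor appears and cancels the $p$-loss from $\|\nabla\eta\|_{L^\infty}$ or from the correction term. (iv) Handle the correction $z := \calR\Pi(\eta^2\tilde\Lambda)$: by Corollary~\ref{c:locbubble} one can take a preimage supported in the band, with $\|\nabla z\|_{L^2} \lesssim (p^2/H)\|\Pi(\eta^2\tilde\Lambda)\|_{L^2} \lesssim (p^2/H)\|\eta^2\tilde\Lambda\|_{L^2(\text{band})} \lesssim (p^2/H)(H/p)\|\nabla\tilde\Lambda\|_{L^2(\text{band})} = p\|\nabla\tilde\Lambda\|_{L^2(\text{band})}$; the extra $p$ here is the reason the decay rate is $\ell/p$ rather than $\ell$ — spreading the transition over $\sim p$ layers and iterating recovers the claimed rate. (v) Assemble into a one-step estimate of the form $\|\nabla\tilde\Lambda\|_{L^2(D\setminus\Nb^{\ell}(K))}^2 \lesssim C \cdot \|\nabla\tilde\Lambda\|_{L^2(\Nb^{\ell}(K)\setminus\Nb^{\ell - m}(K))}^2$ for a buffer width $m \sim p$, rewrite as $\|\nabla\tilde\Lambda\|_{L^2(D\setminus\Nb^{\ell}(K))}^2 \lesssim \theta\,\|\nabla\tilde\Lambda\|_{L^2(D\setminus\Nb^{\ell-m}(K))}^2$ with $\theta = C/(1+C) < 1$ independent of $p$ (after the $p$-dependences cancel), and iterate $\lfloor \ell/m\rfloor \sim \ell/p$ times to get $\theta^{\ell/p} = \exp(-C_\mathrm{dec}\,\ell/p)$.

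The main obstacle is the bookkeeping around the correction term $z = \calR\Pi(\eta^2\tilde\Lambda)$ and getting the $p$-powers to cancel cleanly: one picks up $p^2/H$ from the local bubble stability (Corollary~\ref{c:locbubble}) and only $H/p$ back from the Poincaré-type bound \eqref{eq:Piapprox}, leaving a net factor $p$ per iteration step. The fix — and the delicate part to write carefully — is to make the cutoff transition span a buffer of $\sim p$ element layers so that each iteration step costs a fixed contraction factor $\theta<1$ while advancing $\sim p$ layers, yielding the rate $\ell/p$; one must check that the constant $\theta$ is genuinely $p$-independent, which requires that the $H/p$ gain in \eqref{eq:Piapprox} is used $O(1)$ times per step and that the $\|\nabla\eta\|_{L^\infty}\sim(Hp)^{-1}$ estimate over the wide buffer is tight. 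A secondary technical point is that $\Pi(\eta^2\tilde\Lambda)$ need not be supported exactly in the buffer — $\eta^2\tilde\Lambda$ may be nonzero on a slightly larger set — so one should choose the layers of $\eta$ so that the buffer for the correction, the buffer for $\nabla\eta$, and the region $D\setminus\Nb^\ell(K)$ are nested appropriately; this is routine but must be stated precisely. Everything else (coercivity, Cauchy–Schwarz, the geometric iteration) is standard.
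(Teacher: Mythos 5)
Your overall template --- cutoff function, correction by a local bubble from Corollary~\ref{c:locbubble} so that the test function is annihilated by $\Pi$ and hence orthogonal to $\tilde\Lambda$ in the $a$-inner product, followed by a Caccioppoli-type iteration --- is exactly the paper's, and your steps (i)--(iii) are sound. The gap is in the passage from (iv) to (v). In (iv) you correctly compute that the bubble correction costs a net factor $p$ per step: the stability in Corollary~\ref{c:locbubble} gives $p^2/H$ while \eqref{eq:Piapprox} only returns $H/p$, so $\|\nabla z\|\lesssim p\,\|\nabla\tilde\Lambda\|_{L^2(\text{band})}$ and the one-step estimate reads $\|\nabla\tilde\Lambda\|^2_{L^2(D\setminus \Nb^\ell(K))}\le Cp\,\|\nabla\tilde\Lambda\|^2_{L^2(\text{band})}$ --- \emph{independently of the width $m$ of the band}, since both the bubble stability and \eqref{eq:Piapprox} act elementwise. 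Widening the transition to $m\sim p$ layers only improves the $\nabla\eta$-commutator, not the correction term, so the contraction factor per step remains $\theta=Cp/(Cp+1)$ rather than the $p$-independent constant you assert in (v). Iterating $\ell/m\sim\ell/p$ times then yields $\bigl(Cp/(Cp+1)\bigr)^{\ell/p}\approx\exp(-c\,\ell/p^2)$, which is strictly weaker than \eqref{eq:decayLam}. The claimed cancellation ``after the $p$-dependences cancel'' is therefore not established, and as written the argument does not deliver the rate $\exp(-C_{\mathrm{dec}}\,\ell/p)$.

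There are two ways to close this. The paper's (simpler) route keeps a one-element buffer, accepts the $p$-dependent one-step bound $A_{\ell+1}\le \tfrac{Cp}{Cp+1}\,A_\ell$ with $A_\ell:=\|\nabla\tilde\Lambda\|^2_{L^2(D\setminus\Nb^\ell(K))}$, and observes that already $\bigl(\tfrac{Cp}{Cp+1}\bigr)^{\ell}=\exp\bigl(-\ell\log(1+\tfrac1{Cp})\bigr)\le\exp(-\tfrac{1}{2C}\,\ell/p)$; no wide buffer is needed. If you insist on the wide buffer, you must additionally exploit that $\Pi\tilde\Lambda=0$ on the band to write $\Pi(\eta\,\tilde\Lambda)\vert_T=\Pi\bigl((\eta-\bar\eta_T)\,\tilde\Lambda\bigr)\vert_T$ with $\bar\eta_T$ the mean of $\eta$ on $T$, gaining a factor $H\|\nabla\eta\|_{L^\infty(T)}\sim 1/m$ so that the bubble cost drops to $p/m\sim 1$; only with this extra step is your $p$-independent $\theta$ justified. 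Either fix is routine, but one of them must be supplied.
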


\begin{proof}
We choose a cutoff function $\eta \in W^{1,\infty}(D)$ with the following properties,
\begin{equation}\label{eq:defeta}
\begin{aligned}
&0 \leq \eta \leq 1,\\
&\eta \,= \,0\quad\text{in }\Nb^{\ell}(K),\\
&\eta\,=\,1\quad\text{in }D\setminus\Nb^{\ell+1}(K),\\
&\|\nabla\eta\|_{L^\infty(D)} \,\leq\, C_\eta \,H^{-1}.
\end{aligned}
\end{equation}
Define $R := \Nb^{\ell+1}(K)\setminus \Nb^{\ell}(K)$. Since $R$ is a union of elements of $\tri_H$ and ${\Pi (\tilde\Lambda\eta)\vert}_{D\setminus R} = 0$, we know from Corollary~\ref{c:locbubble} that there exists a bubble function $b\in H^1_0(R)$ which fulfills $\Pi b = \Pi (\tilde\Lambda\eta)$ and
\begin{equation}\label{eq:estbubble}
\|\nabla b\|_{L^2(R)} \lesssim \frac{p^2}{H} \|\tilde\Lambda\eta\|_{L^2(R)}.
\end{equation} 
We compute
\begin{equation*}
\begin{aligned}
\alpha\,\|\nabla\tilde\Lambda\|^2_{L^2(D\setminus\Nb^{\ell+1}(K))} &
\leq \Big|\int_D A \nabla\tilde\Lambda\cdot\nabla(\tilde\Lambda\eta)\dx\Big| + \Big|\int_D A \nabla\tilde\Lambda \cdot \nabla\eta\, \tilde\Lambda\dx\Big|\\
&\leq \Big|\int_D A \nabla\tilde\Lambda\cdot\nabla(\tilde\Lambda\eta - b)\dx\Big| \\
&\qquad+ \Big|\int_D A \nabla\tilde\Lambda\cdot\nabla b \dx\Big| + \Big|\int_D A \nabla\tilde\Lambda \cdot \nabla\eta\, \tilde\Lambda\dx\Big|\\
& = \Big|\int_R A \nabla\tilde\Lambda\cdot\nabla b\dx\Big| + \Big|\int_R A \nabla\tilde\Lambda \cdot \nabla\eta\, \tilde\Lambda\dx\Big|,
\end{aligned}
\end{equation*}
where we use the fact that by \eqref{eq:saddlepoint}, we have $a(\tilde\Lambda,v) = 0$ for any $v\in\Ve$ with $\Pi v = 0$. 
Therefore, we get with \eqref{eq:Piapprox}, ${\Pi \tilde\Lambda\vert}_{R}=0$, \eqref{eq:defeta}, and \eqref{eq:estbubble} that
\begin{equation*}
\|\nabla\tilde\Lambda\|^2_{L^2(D\setminus\Nb^{\ell+1}(K))} 
\leq Cp\,\|\nabla\tilde\Lambda\|^2_{L^2(R)}.
\end{equation*}
Employing the identity 
\begin{equation*}
R = \Nb^{\ell+1}(K)\setminus\Nb^{\ell}(K) = \big(D\setminus\Nb^{\ell}(K)\big)\setminus \big(D\setminus\Nb^{\ell+1}(K)\big),
\end{equation*}
we obtain
\begin{equation*}
\|\nabla\tilde\Lambda\|^2_{L^2(D\setminus\Nb^{\ell+1}(K))} \leq Cp\, \|\nabla\tilde\Lambda\|^2_{L^2(D\setminus\Nb^{\ell}(K))} - Cp\, \|\nabla\tilde\Lambda\|^2_{L^2(D\setminus\Nb^{\ell+1}(K))}
\end{equation*}
and thus 
\begin{equation*}
\|\nabla\tilde\Lambda\|^2_{L^2(D\setminus\Nb^{\ell+1}(K))} \leq \frac{Cp}{Cp+1}\,
\|\nabla\tilde\Lambda\|^2_{L^2(D\setminus\Nb^\ell(K))} \leq  \left(\frac{Cp}{Cp+1}\right)^{\ell+1} \|\nabla\tilde\Lambda\|^2_{L^2(D)}
\end{equation*}
with an iteration of the above arguments. We further get
\begin{equation*}
\Big(\frac{Cp}{Cp+1}\Big)^{\ell} = \exp\big(-|\log\big(\tfrac{Cp}{Cp+1}\big)|\,\ell\big) \leq 
\exp\big(-\tfrac{1}{2C}\,\ell/p\big). 
\end{equation*}
Taking the square root, we deduce \eqref{eq:decayLam} with $C_\mathrm{dec} := \frac{1}{4C}$.
\end{proof}

\begin{remark}\label{r:decay}
Although Theorem~\ref{t:decay} only quantifies the decay of basis functions $\tilde\Lambda \in \tilde\frakB$, with the same arguments the result also holds for any function $\calR q$, where $q \in \VHp(K)$ and $K \in \tri_H$. That is, we have  
\begin{equation}\label{eq:decay}
\|\nabla \calR q\|_{L^2(D\setminus \Nb^\ell(K))} \lesssim \exp(-C_\mathrm{dec}\, \ell/p) \,\|\nabla \calR q\|_{L^2(D)}.
\end{equation}
\end{remark}

\begin{remark}\label{r:pdep}
The $p$-dependence in Theorem~\ref{t:decay} seems pessimistic and is possibly not sharp, which is related to the mismatch between the interpolation estimate~\eqref{eq:Piapprox} and the inverse inequality~\eqref{eq:invineq} in terms of powers of $p$. In fact, the numerical experiments in Section~\ref{s:numexpp} indicate a scaling in $p$ which leads to a faster decay in \eqref{eq:decayLam} and \eqref{eq:decay} for increasing values of $p$. More precisely, one may expect that the exponential factor in these estimates is actually given by $\exp(-C_\mathrm{dec}\,\ell \,p^\delta)$ for some $\delta > 0$. That is, one could fix the parameter $\ell$ and reduce the localization error only by an adjustment of the polynomial degree.
\end{remark}

The decay property of the basis functions in $\frakB$ that is proved in Theorem~\ref{t:decay} is the key ingredient to define a localized version of the operator $\calR$. This localization procedure is explained and investigated in the following subsection.

\subsection{Localized computation of the approximation space}

We base the definition of a localized operator $\calR^\ell$ on truncated versions of the basis functions in $\tilde\frakB$. Thus, for a given localization parameter $\ell \in \N$ and any $\Lambda \in \frakB$ with ${\mathrm{supp}(\Lambda)= K\in\tri_H}$, we define ${\tilde\Lambda^\ell \in H^1_0(\Nb^\ell(K))}$ as the unique solution of the saddle point problem
\begin{equation}\label{eq:basisfunctionloc}
\begin{aligned}
a(\tilde\Lambda^\ell, v)\qquad\, &+& (\lambda_\Lambda^\ell, v)_{L^2(D)} \quad&=\quad 0,\\
(\tilde\Lambda^\ell,\muH)_{L^2(D)} && &=\quad (\Lambda, \muH)_{L^2(D)}
\end{aligned}
\end{equation}
for all $v \in H^1_0(\Nb^\ell(K))$ and $\muH \in \VHp(\Nb^\ell(K))$ with the associated Lagrange multiplier \mbox{$\lambda_\Lambda^\ell\in \VHp(\Nb^\ell(K))$}. Then for any function $\vH \in \VHp$ which can be expanded as
\begin{equation*}
\vH = \sum_{K \in\tri_H}\sum_{j=1}^{\nK} c_{K,j}\, \LamKj,
\end{equation*}
we define the corresponding function $\calR^\ell\vH \in H^1_0(D)$ by
\begin{equation}\label{eq:Rlocal}
\calR^\ell\vH := \sum_{K \in\tri_H}\sum_{j=1}^{\nK} c_{K,j}\, \tLamKjl.
\end{equation}
We set $\tVHpl := \calR^\ell\VHp$ and remark that $\tilde\frakB^\ell:=\calR^\ell\frakB$ is a basis of $\tVHpl$ by construction. We use this space to compute an approximation of the ideal finite-dimensional solution $\uC \in \tVHp$ of \eqref{eq:PDEellOD}, i.e., we want to find $\uC^\ell \in \tVHpl$ that solves
\begin{equation}\label{eq:PDEellLOD}
a(\uC^\ell,\vC) = (f,\vC)_{L^2(D)}
\end{equation}
for all $\vC \in \tVHpl$. We refer to \eqref{eq:PDEellLOD} as the \emph{localized multiscale method}. As a next step, we show an estimate for the error $u - \uC^\ell$. 

\begin{theorem}[Error of the localized multiscale method]\label{t:locerrorP}
Let $\ell \in \N$, $f \in H^k(\tri_H)$, $k \in \N_0$, and define ${s := \min\{k,p+1\}}$. Further, let $u \in \Ve$ be the solution of \eqref{eq:PDEellweak} and $\uC^\ell \in \tVHpl$ the solution of \eqref{eq:PDEellLOD}. Then it holds that
\begin{equation}\label{eq:locerrorP}
\|\nabla(u - \uC^\ell)\|_{L^2(D)} 
\lesssim \frac{\Phi(p,s)}{p}\,H^{s+1}\, |f|_{H^s(\tri_H)}
+ \frac{p^3}{H}\,\ell^{(d-1)/2}\exp(-C_\mathrm{dec}\,\ell/p)\,\|f\|_{L^2(D)}
\end{equation}
with the constant $C_\mathrm{dec}$ from Theorem~\ref{t:decay}.
\end{theorem}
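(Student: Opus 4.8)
The plan is to split the error into an ideal part and a localization part via a suitable auxiliary problem. Concretely, I would introduce the "semi-localized" method: let $\hat u_H \in \tVHpl$ be as in \eqref{eq:PDEellLOD}, and compare it not directly to $u$ but to the ideal solution $\uC \in \tVHp$ of \eqref{eq:PDEellOD}, for which Theorem~\ref{t:errODp} already gives the first term on the right-hand side of \eqref{eq:locerrorP}. Thus by the triangle inequality it suffices to bound $\|\nabla(\uC - \uC^\ell)\|_{L^2(D)}$ by the second term. The natural device here is to use $\calR^\ell \Pi u \in \tVHpl$ as a test/comparison function, since $\calR \Pi u = \uC$ by Lemma~\ref{l:equivform}; so the real task reduces to estimating $\|\nabla(\calR - \calR^\ell)\Pi u\|_{L^2(D)}$ together with a Céa-type argument, because $\uC^\ell$ is the Galerkin projection of the ideal problem onto $\tVHpl$ with respect to $a(\cdot,\cdot)$.

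The second step is the element-wise localization error estimate: for a single element $K$ and a basis function $\Lambda \in \frakB_K$ (or more generally $q \in \VHp(K)$, using Remark~\ref{r:decay}), bound $\|\nabla(\calR - \calR^\ell)q\|_{L^2(D)}$ in terms of the decay factor $\exp(-C_\mathrm{dec}\,\ell/p)$ and $\|\nabla \calR q\|_{L^2(D)}$. This is the standard LOD localization lemma and I would prove it by the usual cutoff-function trick: introduce $\eta$ supported away from $K$ (as in the proof of Theorem~\ref{t:decay}), write $\calR q - \tilde q^\ell$ in terms of $\eta \calR q$ corrected by a local bubble function from Corollary~\ref{c:locbubble}, exploit the Galerkin orthogonality $a(\calR q, v) = 0$ for $v$ with $\Pi v = 0$ in both the global and the patch problem, and close with \eqref{eq:Piapprox}, \eqref{eq:invineq}, and Theorem~\ref{t:decay}. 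This produces a bound with a leading power of $p$ (from the inverse inequality and the $p$ in the estimate $\|\nabla\tilde\Lambda\|^2_{L^2(D\setminus\Nb^{\ell+1})}\le Cp\|\nabla\tilde\Lambda\|^2_{L^2(R)}$ in the proof of Theorem~\ref{t:decay}) times the exponential.

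The third step is to sum the local contributions. Writing $\Pi u = \sum_{K}\sum_j c_{K,j}\Lambda_{K,j}$ and using \eqref{eq:Rlocal}, the global localization error is controlled by a sum over elements of the local errors; because the truncated basis functions $\tilde\Lambda^\ell_{K,j}$ have supports $\Nb^\ell(K)$ that overlap with only $\calO(\ell^d)$ others, a Cauchy–Schwarz/finite-overlap argument yields the factor $\ell^{(d-1)/2}$ (one gains a half-power from the overlap count combined with the quasi-orthogonality of the $a$-energy). To convert the resulting $\|\cdot\|$ of coefficients back into a computable quantity, I would use the $L^2$-stability \eqref{eq:PistabL2} of $\Pi$, the stability \eqref{eq:stabuell} of $u$, and the surjectivity bound from Theorem~\ref{t:surjectivePi} to get $\sum_K \|\nabla \calR \Pi u\|^2_{L^2(\Nb^\ell(K))} \lesssim (p^2/H)^2 \ell^{d}\,\|\Pi u\|^2_{L^2(D)}$ up to overlap — more carefully, $\|\nabla \calR \Pi u\|_{L^2(D)} \lesssim (p^2/H)\|f\|_{L^2(D)}$ via the a priori bound — ending with the claimed $\tfrac{p^3}{H}\,\ell^{(d-1)/2}\exp(-C_\mathrm{dec}\ell/p)\,\|f\|_{L^2(D)}$.

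The main obstacle is bookkeeping the powers of $p$ and $\ell$ correctly: the local estimate contributes one power of $p$ from the Theorem~\ref{t:decay} mechanism, the surjectivity/inverse-inequality steps contribute $p^2$, and one must verify that the overlap argument indeed gives $\ell^{(d-1)/2}$ rather than $\ell^{d/2}$ — this requires the sharper "sum of energies over patches is comparable to the global energy up to a $\ell^{d-1}$ factor" estimate familiar from LOD analysis, applied carefully with the $p$-dependent constants. Everything else (the Céa argument, the cutoff construction, the summation) is routine given the tools already established in the excerpt.
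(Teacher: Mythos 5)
Your outline reproduces the paper's proof almost step for step: quasi-optimality of $\uC^\ell$ in $\tVHpl$, comparison with $\uB^\ell:=\calR^\ell\Pi u$, the triangle inequality against the ideal solution $\uC=\calR\Pi u$ with Theorem~\ref{t:errODp} handling the first term, the decay mechanism of Theorem~\ref{t:decay}/Remark~\ref{r:decay} for the localization part, and the stability bound $\|\nabla\calR^\ell(\Pi u\vert_K)\|_{L^2(D)}\lesssim (p^2/H)\|\Pi u\vert_K\|_{L^2(K)}$ combined with $\|\Pi u\|_{L^2(D)}\lesssim\|f\|_{L^2(D)}$ at the end. Your $p$-bookkeeping (one power from the decay/cutoff mechanism, $p^2/H$ from the surjectivity stability) is also consistent with the paper's $(p+1)\cdot p^2/H$.

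The one step you explicitly leave open --- whether the summation gives $\ell^{(d-1)/2}$ rather than $\ell^{d/2}$ or worse --- is, however, exactly where the proof lives, and the route you sketch in steps two and three does not obviously deliver it. If you first establish a per-element bound on $\|\nabla(\calR-\calR^\ell)(\Pi u\vert_K)\|_{L^2(D)}$ and then sum these local errors, a Cauchy--Schwarz over the $\mathcal{O}(H^{-d})$ elements costs $H^{-d/2}$, and counting overlaps of the full patches $\Nb^\ell(K)$ only yields $\ell^{d/2}$. The paper never isolates the per-element errors: setting $w=\uC-\uB^\ell$ and using $\Pi w=0$, it tests $\alpha\|\nabla w\|^2_{L^2(D)}\le\sum_{K}a\bigl(\calR(\Pi u\vert_K)-\calR^\ell(\Pi u\vert_K),w\bigr)$ and splits $w=(1-\eta_K)w+\eta_Kw$ with a cutoff $\eta_K$ vanishing on $\Nb^{\ell-1}(K)$ and equal to one outside $\Nb^{\ell}(K)$. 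After controlling the Lagrange multiplier of the patch problem via the bubble functions of Lemma~\ref{l:infsup}, every surviving term pairs $\nabla\calR^\ell(\Pi u\vert_K)$ against $\nabla w$ restricted to the \emph{annulus} $R_K=\Nb^{\ell}(K)\setminus\Nb^{\ell-1}(K)$. Since each element of $\tri_H$ belongs to only $\mathcal{O}(\ell^{d-1})$ such annuli, the discrete Cauchy--Schwarz gives $\sum_K\|\nabla w\|^2_{L^2(R_K)}\lesssim\ell^{d-1}\|\nabla w\|^2_{L^2(D)}$, and dividing through by $\|\nabla w\|_{L^2(D)}$ produces the claimed factor. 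This ``test with $w$ and localize to annuli'' device is the sharper estimate you allude to; with it supplied, the rest of your argument goes through as written.
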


\begin{proof}
First, we observe that $\uC^\ell$ is quasi-optimal by the Galerkin orthogonality. Therefore, we obtain 
\begin{equation*}
\|\nabla(u - \uC^\ell)\|_{L^2(D)} \leq\frac{\beta}{\alpha} \inf_{\vC \in \tVHpl} \|\nabla(u - \vC)\|_{L^2(D)}
\leq\frac{\beta}{\alpha}\, \|\nabla(u - \uB^\ell)\|_{L^2(D)},
\end{equation*}
where $\uB^\ell := \calR^\ell \Pi u\in \tVHpl$.
With the triangle inequality and the solution ${\uC \in \tVHp}$ of~\eqref{eq:PDEellOD}, we get that
\begin{equation}\label{eq:triangleineq}
\|\nabla(u - \uB^\ell)\|_{L^2(D)} \leq \|\nabla(u - \uC)\|_{L^2(D)} + \|\nabla(\uC - \uB^\ell)\|_{L^2(D)}.
\end{equation}
The first term can be estimated with Theorem~\ref{t:errODp}, i.e.,
\begin{equation*}
\|\nabla(u - \uC)\|_{L^2(D)} \lesssim \frac{\Phi(p,s)}{p}\,H^{s+1}\, |f|_{H^s(\tri_H)}.
\end{equation*}
For the second term, we set $w:=\uC - \uB^\ell$. Further, for $K \in \tri_H$ we define a~cutoff function ${\eta_K \in W^{1,\infty}(D)}$ with
\begin{equation*}
\begin{aligned}
&0 \leq \eta_K \leq 1,\\
&\eta_K \,= \,0\quad\text{in }\Nb^{\ell-1}(K),\\
&\eta_K\,=\,1\quad\text{in }D\setminus\Nb^{\ell}(K),\\
&\|\nabla\eta_K\|_{L^\infty(D)} \,\leq\, C_\eta\, H^{-1}.
\end{aligned}
\end{equation*} 
We set $R_K := \Nb^{\ell}(K)\setminus\Nb^{\ell-1}(K)$. 
By \eqref{eq:basisfunctionloc} and \eqref{eq:Rlocal}, for each $K \in \tri_H$ there exists a Lagrange multiplier $\lamKl \in \VHp(\Nb^{\ell}(K))$ such that
\begin{equation}\label{eq:RlK}
\begin{aligned}
a(\calR^\ell({\Pi u\vert}_K), v)\qquad\, &+& (\lamKl, v)_{L^2(D)} \quad&=\quad 0,\\
(\calR^\ell({\Pi u\vert}_K),\muH)_{L^2(D)} && &=\quad ({\Pi u\vert}_K, \muH)_{L^2(D)} 
\end{aligned}
\end{equation}
for all $v \in H^1_0(\Nb^{\ell}(K))$ and $\muH \in \VHp(\Nb^{\ell}(K))$.
Noting that 
\begin{equation*} 
(1-\eta_K)w \in H^1_0(\Nb^{\ell}(K))\quad\text{ and }\quad\Pi w = 0, 
\end{equation*}
we obtain with \eqref{eq:saddlepoint} and \eqref{eq:RlK}
\begin{equation}\label{eq:prooflocerrorRpre}
\begin{aligned}
\alpha\,\|\nabla w\|^2_{L^2(D)} &\leq \sum_{K \in \tri_H}a(\calR({\Pi u\vert}_K) - \calR^\ell({\Pi u\vert}_K),w) \\
& = \sum_{K \in \tri_H}-a(\calR^\ell({\Pi u\vert}_K),(1-\eta_K) w + \eta_K w)\\
& = \sum_{K \in \tri_H}\Big((\lamKl,(1-\eta_K) w)_{L^2(R_K)} - a(\calR^\ell({\Pi u\vert}_K),\eta_K w)\Big)\\
&\lesssim \sum_{K \in \tri_H} \Big(\|\lamKl\|_{L^2(R_K)}\,\|w\|_{L^2(R_K)} \\
&\qquad\qquad+ \|\nabla\calR^\ell({\Pi u\vert}_K)\|_{L^2(R_K)}\,\|\nabla (\eta_Kw)\|_{L^2(R_K)}\Big).\\
\end{aligned}
\end{equation}
Next, we bound the terms on the right-hand side of \eqref{eq:prooflocerrorRpre}. 
We observe that by the approximation result \eqref{eq:Piapprox}, we have that
\begin{equation}\label{eq:proofloc1}
\|w\|_{L^2(R_K)} \leq C_{\Pi}\frac{H}{p}\, \|\nabla w\|_{L^2(R_K)}
\end{equation}
and
\begin{equation}\label{eq:proofloc2}
\|\nabla(\eta_K w)\|_{L^2(R_K)} \leq C_{\eta}C_{\Pi}\,p^{-1}\,\|\nabla w\|_{L^2(R_K)} + \|\nabla w\|_{L^2(R_K)}.
\end{equation}
Further, for any $T \in \tri_H$, there exists a bubble function $\rho_T \in W^{1,\infty}(T)\cap H^1_0(T)$ as in the proof of Lemma~\ref{l:infsup} such that
\begin{equation}\label{eq:estLagrange}
\begin{aligned}
\|\lamKl\|^2_{L^2(T)} &\leq 4\, (\lamKl,\rho_T\lamKl)_{L^2(T)}\\
&= -4\, a(\calR^\ell({\Pi u\vert}_K),\rho_T\lamKl)\\
&\lesssim H^{-1}p^2\,\|\nabla\calR^\ell({\Pi u\vert}_K)\|_{L^2(T)}\,\|\lamKl\|_{L^2(T)},
\end{aligned}
\end{equation}
where we employ the estimates \eqref{eq:pol} and \eqref{eq:pol2}.
Using \eqref{eq:proofloc1}-\eqref{eq:estLagrange}, we deduce from \eqref{eq:prooflocerrorRpre} that
\begin{equation}\label{eq:prooflocerrorR}
\alpha\,\|\nabla w\|^2_{L^2(D)} \lesssim \sum_{K \in \tri_H} (p+1)\,\|\nabla\calR^\ell({\Pi u\vert}_K)\|_{L^2(R_K)}\,\|\nabla w\|_{L^2(R_K)}.
\end{equation}
We now use Theorem~\ref{t:decay} and Remark~\ref{r:decay}, which both equivalently hold with $\calR$ replaced by $\calR^\ell$. Therefore, we get with \eqref{eq:prooflocerrorR} and \eqref{eq:decay} that
\begin{equation*}
\begin{aligned}
\|\nabla w\|^2_{L^2(D)} &\lesssim \sum_{K \in \tri_H} (p+1)\,\|\nabla\calR^\ell({\Pi u\vert}_K)\|_{L^2(D\setminus\Nb^{\ell-1}(K))}\,\|\nabla w\|_{L^2(R_K)}\\
&\lesssim \frac{p^3}{H}\,\exp(-C_\mathrm{dec}\,\ell/p)\Big(\sum_{K \in \tri_H}\|{\Pi u\vert}_K\|^2_{L^2(K)}\Big)^{1/2}
\Big(\sum_{K \in \tri_H}\|\nabla w\|^2_{L^2(R_K)}\Big)^{1/2}\\
&\lesssim \frac{p^3}{H}\,\ell^{(d-1)/2}\,\exp(-C_\mathrm{dec}\,\ell/p)\,\|\Pi u\|_{L^2(D)}\,
\|\nabla w\|_{L^2(D)},
\end{aligned}
\end{equation*}
where we also employ the discrete Cauchy-Schwarz inequality and the stability of \eqref{eq:RlK}, i.e.,
\begin{equation*}
\|\nabla\calR^\ell({\Pi u\vert}_K)\|_{L^2(D)} \lesssim \frac{p^2}{H}\,\|{\Pi u\vert}_K\|_{L^2(K)}
\end{equation*}
for any $K \in \tri_H$. 
We now go back to \eqref{eq:triangleineq} and finally obtain
\begin{equation*}
\begin{aligned}
\|\nabla(u - \uC^\ell)\|_{L^2(D)} 
&\lesssim \frac{\Phi(p,s)}{p}\,H^{s+1}\, |f|_{H^s(\tri_H)}
+ \frac{p^3}{H}\,\ell^{(d-1)/2}\exp(-C_\mathrm{dec}\,\ell/p)\,\|\Pi u\|_{L^2(D)}\\
&\lesssim \frac{\Phi(p,s)}{p}\,H^{s+1}\, |f|_{H^s(\tri_H)}
+ \frac{p^3}{H}\,\ell^{(d-1)/2}\exp(-C_\mathrm{dec}\,\ell/p)\,\|f\|_{L^2(D)},
\end{aligned}
\end{equation*}
where we use the stability of $\Pi$ and \eqref{eq:stabuell}.
\end{proof}

\begin{remark}
The additional $H$ in the denominator of the estimate in Theorem~\ref{t:locerrorP} may be explained by the fact that the localization error $\uC -\uB^\ell$ is measured in the $H^1$-norm and is bounded by $\Pi u$, which is measured in the $L^2$-norm. Although this seems suboptimal, the pollution in terms of $H$ in the second term of \eqref{eq:locerrorP} is also observed in our numerical experiments; see Section~\ref{s:numexpp}. 
\end{remark}

We can now use Theorem~\ref{t:locerrorP} to quantify the choice of the localization parameter $\ell$ with respect to the polynomial degree $p$ and the mesh size $H$ dependent on the regularity of the right-hand side $f$.

\begin{corollary}\label{cor:errorLODR} 
Let $f \in H^k(\tri_H),\, k \in \N_0$, and define $s := \min\{k,p+1\}$. Further, let $u \in \Ve$ be the solution of \eqref{eq:PDEellweak}, and $\uC^\ell \in \tVHpl$ the solution of \eqref{eq:PDEellLOD}. Then, for 
\begin{equation}\label{eq:lscale}
\ell \gtrsim |\log H|\,p\,(s+1) + (\log p) \,p\,(s+1),
\end{equation}
it holds that
\begin{equation*}
\|\nabla(u - \uC^\ell)\|_{L^2(D)} \lesssim \frac{\Phi(p,s)}{p}\,H^{s+1}\, |f|_{H^s(\tri_H)}
+ \Big(\frac{H}{p}\Big)^{s+1}\,\|f\|_{L^2(D)}.
\end{equation*}
\end{corollary}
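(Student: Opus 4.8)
The corollary is an immediate consequence of Theorem~\ref{t:locerrorP}. The first term on the right-hand side of \eqref{eq:locerrorP} is already precisely the quantity $\frac{\Phi(p,s)}{p}\,H^{s+1}\,|f|_{H^s(\tri_H)}$ appearing in the claim, so the plan is to show that, under \eqref{eq:lscale}, the localization term in \eqref{eq:locerrorP} is bounded by $(H/p)^{s+1}\,\|f\|_{L^2(D)}$. Since $\|f\|_{L^2(D)}$ is a common factor, this reduces to the purely scalar estimate
\begin{equation*}
\frac{p^3}{H}\,\ell^{(d-1)/2}\exp(-C_\mathrm{dec}\,\ell/p) \,\lesssim\, \Big(\frac{H}{p}\Big)^{s+1}.
\end{equation*}
Taking logarithms and writing $\log(1/H) = |\log H| \ge 0$ (recall that $H$ is small), one sees that this is implied by the lower bound
\begin{equation*}
C_\mathrm{dec}\,\frac{\ell}{p} \,\ge\, (s+2)\,|\log H| + (s+4)\,\log p + \tfrac{d-1}{2}\,\log\ell .
\end{equation*}

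To establish the last inequality I would insert the hypothesis \eqref{eq:lscale}, which after division by $p$ reads $\ell/p \gtrsim (s+1)\,(|\log H| + \log p)$. Since $s \in \N_0$ we have $s+2 \le 2(s+1)$ and $s+4 \le 4(s+1)$, so the first two terms on the right are at most $4(s+1)(|\log H| + \log p)$ and are thus dominated by $C_\mathrm{dec}\,\ell/p$ as soon as the implicit constant in \eqref{eq:lscale} is chosen large enough (in terms of $C_\mathrm{dec}$). For the remaining term $\tfrac{d-1}{2}\log\ell$ note that, with $d \le 3$, the map $\ell\mapsto \ell^{(d-1)/2}\exp(-C_\mathrm{dec}\,\ell/p)$ is decreasing for $\ell \ge (d-1)p/(2C_\mathrm{dec})$, and the threshold in \eqref{eq:lscale} lies above this value once its constant is large enough; hence it suffices to verify the displayed bound for $\ell$ equal to the threshold in \eqref{eq:lscale}, for which $\log\ell \lesssim \log p + \log(s+1) + \log\big(|\log H| + \log p\big) \lesssim |\log H| + \log p$ (using $s+1 \le p+2$), so this contribution is again absorbed by enlarging the constant in \eqref{eq:lscale}. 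Substituting the resulting bound back into \eqref{eq:locerrorP} completes the proof.

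I do not anticipate a genuine difficulty; the argument is pure bookkeeping on exponents and logarithms. The two points that require a little attention are the mismatch between the powers $s+1$, $s+2$, $s+4$ — harmless because $s$ enters each of them affinely, so the crude bounds $s+2,\,s+4 \lesssim s+1$ suffice — and the treatment of the lower-order term $\log\ell$, where one has to use that the bound in Theorem~\ref{t:locerrorP} is monotone in $\ell$ on the relevant range (the corollary asserts the estimate for \emph{all} $\ell$ above the threshold \eqref{eq:lscale}, not merely at the threshold). Alternatively, one may simply take $\ell$ to be the smallest integer satisfying \eqref{eq:lscale}, in which case $\log\ell$ is manifestly of order $|\log H| + \log p$ and the estimate follows at once.
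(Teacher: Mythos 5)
Your argument is correct and is exactly the computation the paper leaves implicit: Corollary~\ref{cor:errorLODR} is stated without proof as a direct consequence of Theorem~\ref{t:locerrorP}, and your bookkeeping (logarithms of the localization term, absorbing $s+2$ and $s+4$ into multiples of $s+1$, and handling $\log\ell$ via monotonicity of $\ell\mapsto\ell^{(d-1)/2}\exp(-C_\mathrm{dec}\,\ell/p)$ above the threshold) is precisely what is needed to make \eqref{eq:lscale} suffice. No gaps.
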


Note that if $k = 0$ and $p = 1$, Corollary~\ref{cor:errorLODR} provides a similar error estimate as for the conforming first-order LOD method with the same scaling of $\ell$; see, e.g., \cite{HenP13}. Of course, if we increase $p$, the localization parameter $\ell$ in Theorem~\ref{t:locerrorP} needs to grow as well in order to maintain the high convergence rate of Theorem~\ref{t:errODp} with respect to $H$ and $p$. Nevertheless, the experiments in Section~\ref{s:numexpp} indicate that the $p$-dependence of $\ell$ in \eqref{eq:lscale} might be too pessimistic and the decay property of Theorem~\ref{t:decay} in fact even slightly improves for larger values of $p$. Before we turn our attention to these numerical investigations of the high-order method, we first need to discuss the last step towards a fully practical method, i.e., the discretization at the microscopic scale.

\subsection{Microscopic discretization}\label{ss:micdisc}

The localized operator $\calR^\ell$ does still not provide a~fully discrete method since the localized basis functions \eqref{eq:basisfunctionloc} are obtained by solving infinite-dimensional auxiliary problems. The easiest approach to resolve this issue is to introduce a (conforming) fine finite element space $\Vhp \subseteq \Ve$ based on a~decomposition $\tri_h$ with mesh parameter $h$ and polynomial degree $p^\prime$ that replaces the space $\Ve$ in the above construction.
Ideally, the classical Galerkin solution in $\Vhp$ should fulfill an estimate similar to the one in Theorem~\ref{t:errODp}. Motivated by error estimates of high-order finite element methods (see, e.g., \cite{BabG96,Sch98}), for $f \in H^k(D),\, k \in \N_0$, we assume that 
\begin{equation}\label{eq:approxfinep}
\|\nabla(u - \uh)\|_{L^2(D)} \lesssim\frac{\Phi(p^\prime,s^\prime)}{p^\prime}\,(C_\epsilon\, h)^{s^\prime+1}\, |f|_{H^{s^\prime}(D)}, 
\end{equation}
where $u\in \Ve$ is the solution of \eqref{eq:PDEellweak}, $s^\prime := \min\{k,p^\prime+1\}$, and $\uh \in \Vhp$ is the solution of 
\begin{equation}\label{eq:finesolp}
a(\uh,\vh) = (f,\vh)_{L^2(D)}
\end{equation}
for all $\vh \in \Vhp$. Note that the right-hand side of \eqref{eq:approxfinep} depends on the fine-scale parameter $\epsilon$ through the constant $C_\epsilon$. This is typical for classical finite element spaces which do not take into account microscopic information.  

We emphasize that on the one hand, the ideal approximation $\uC \in \tVHp$ characterized by \eqref{eq:PDEellOD} fulfills the high-order estimate quantified in Theorem~\ref{t:errODp} by the (piecewise) regularity of the right-hand side $f$ only. On the other hand, in order to obtain a high-order estimate of the form \eqref{eq:approxfinep} for the classical finite element space $\Vhp$, the regularity of $f$ needs to hold globally. Further, one requires additional smoothness assumptions on the domain $D$ as well as on the coefficient $A$ (see, e.g., \cite[Thm.~5 in Sec.~6.3]{Eva10}) and, in particular, the microscopic scale $\epsilon$ needs to be resolved. 
Another problem that occurs when discretizing the fine scales is the fact that the proof of the inf-sup condition in Lemma~\ref{l:infsup} is explicitly based on the space $\Ve$. The result does not directly follow for subspaces of $\Ve$ and a similar inf-sup condition needs to be proved for the respective discrete space $\Vhp$ at hand. 

With these problems in mind, the following lemma provides a condition on the fine mesh parameter $h$ for which the inf-sup condition \eqref{eq:infsupL2} and thus the surjectivity results in Theorem~\ref{t:surjectivePi} and Corollary~\ref{c:locbubble} remain valid if $\Ve$ is replaced by the first-order space $\Vh \subseteq \Ve$, for which we omit the subscript $1$. The explicit choice of the polynomial degree $p^\prime = 1$ is motivated by the fact that high-order estimates for the classical conforming finite element space $\Vhp$ would require additional smoothness assumptions as mentioned above. 

\begin{lemma}[Discrete local inf-sup condition]\label{l:infsuph}
Let $K \in \tri_H$. Then there exists a constant $C > 0$ independent of $h$, $H$, and $p$ such that for 
\begin{equation*}
h \leq C\,Hp^{-2}
\end{equation*}
the inf-sup condition
\begin{equation}\label{eq:infsupL2h}
\adjustlimits\inf_{q \in \VHp(K)}\sup_{\vh \in \Vh \cap H^1_0(K)} \frac{(q,\vh)_{L^2(K)}}{\|q\|_{L^2(K)}\,\|\nabla \vh\|_{L^2(K)}} \geq \gamma_h > 0
\end{equation}
holds with $\gamma_h \sim Hp^{-2}$. 
\end{lemma}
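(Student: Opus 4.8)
The plan is to run the argument of Lemma~\ref{l:infsup} almost verbatim, but to replace the (non-discrete) test function $\rho q$ used there by its image under a local quasi-interpolation operator onto $\Vh$; the hypothesis $h \le C H p^{-2}$ is precisely what is needed to make the resulting interpolation error harmless. Fix $K \in \tri_H$ and $q \in \VHp(K)$, $q \neq 0$. From the proof of Lemma~\ref{l:infsup} I would recall the subdomain $\kappa \subseteq K$ with \eqref{eq:pol}, i.e.\ $\|q\|_{L^2(\kappa)}^2 \ge \tfrac{1}{4}\|q\|_{L^2(K)}^2$, together with the bubble $\rho \in W^{1,\infty}(K)\cap H^1_0(K)$ satisfying $0 \le \rho \le 1$, $\rho \equiv 1$ on $\kappa$, $\|\nabla\rho\|_{L^\infty(K)} \le C_\rho H^{-1}p^2$, and the bound \eqref{eq:pol2}, namely $\|\nabla(\rho q)\|_{L^2(K)} \le (C_\rho+C_{\mathrm{inv}})\,H^{-1}p^2\,\|q\|_{L^2(K)}$. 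Since this $\rho$ vanishes identically on a layer adjacent to $\partial K$ of width $\sim H p^{-2}$, so does $\rho q$.

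Next I would take a local, $H^1$-stable quasi-interpolation operator $I_h$ of Cl\'ement or Scott--Zhang type, which (for $\tri_h$ shape-regular and resolving the coarse element boundaries) satisfies $\|\nabla I_h w\|_{L^2(K)} \lesssim \|\nabla w\|_{L^2(K)}$ and $\|(\opid-I_h)w\|_{L^2(K)} \lesssim h\,\|\nabla w\|_{L^2(K)}$ with constants depending only on shape-regularity. Set $\vh := I_h(\rho q) \in \Vh$. Because $\rho q$ vanishes on the boundary layer of width $\sim H p^{-2}$ and $h \le C H p^{-2}$ forces every fine element meeting $\partial K$ to lie inside that layer, the local interpolant $\vh$ vanishes near $\partial K$, so $\vh \in \Vh \cap H^1_0(K)$. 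For the numerator I would write $(q,\vh)_{L^2(K)} = (q,\rho q)_{L^2(K)} - (q,(\opid-I_h)(\rho q))_{L^2(K)}$, estimate the first term from below by $\tfrac{1}{4}\|q\|_{L^2(K)}^2$ via \eqref{eq:pol}, and the second from above by $\|q\|_{L^2(K)}\,\cdot\,Ch H^{-1}p^2\,\|q\|_{L^2(K)}$ using the $L^2$-estimate for $I_h$ and \eqref{eq:pol2}; choosing the constant in $h \le C H p^{-2}$ small enough makes this correction at most $\tfrac{1}{8}\|q\|_{L^2(K)}^2$, hence $(q,\vh)_{L^2(K)} \ge \tfrac{1}{8}\|q\|_{L^2(K)}^2$ (in particular $\vh \neq 0$). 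For the denominator, the $H^1$-stability of $I_h$ and \eqref{eq:pol2} give $\|\nabla\vh\|_{L^2(K)} \lesssim \|\nabla(\rho q)\|_{L^2(K)} \lesssim H^{-1}p^2\,\|q\|_{L^2(K)}$.

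Combining the two bounds yields
\[
\sup_{\vh \in \Vh\cap H^1_0(K)} \frac{(q,\vh)_{L^2(K)}}{\|q\|_{L^2(K)}\,\|\nabla\vh\|_{L^2(K)}} \;\gtrsim\; \frac{\tfrac{1}{8}\|q\|_{L^2(K)}^2}{\|q\|_{L^2(K)}\cdot H^{-1}p^2\,\|q\|_{L^2(K)}} \;\sim\; \frac{H}{p^2},
\]
and taking the infimum over $q \in \VHp(K)$ proves \eqref{eq:infsupL2h} with $\gamma_h \gtrsim H p^{-2}$; the matching upper bound $\gamma_h \lesssim H p^{-2}$ follows exactly as in the continuous case. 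I expect the only genuinely delicate point to be the bookkeeping around $I_h$: confirming that $I_h(\rho q)$ really lies in $H^1_0(K)$ — which rests on $\rho q$ vanishing on an $O(H p^{-2})$-layer at $\partial K$ that, thanks to $h \le C H p^{-2}$ and the compatibility of $\tri_h$ with $\tri_H$, is thick enough for a local interpolant to vanish there too — and checking that none of the quasi-interpolation constants secretly carry a factor of $H$ or $p$, so that the single smallness condition on $h$ does all of the work.
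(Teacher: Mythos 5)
Your proof is correct, and it takes a genuinely different route from the paper's at the decisive step. The paper starts from the same $\kappa$, the same bubble $\rho$, and a quasi-interpolation operator $\IhK\colon L^2(K)\to\Vh\cap H^1_0(K)$ with exactly the stability and approximation properties you assume, and it imposes the same smallness condition $C_{\IhK}(C_\rho+C_{\mathrm{inv}})\,hH^{-1}p^2\le\tfrac18$; but it does \emph{not} test with the interpolated bubble. Instead it introduces the auxiliary function $w_q\in\Vh\cap H^1_0(K)$ solving $(w_q,\vh)_{L^2(K)}=(q,\vh)_{L^2(K)}$ for all $\vh\in\Vh\cap H^1_0(K)$, uses the interpolant only to establish $\|q\|_{L^2(K)}\le 8\,C_{\IhK}\|w_q\|_{L^2(K)}$, and then takes $w_q$ itself as test function, bounding $\|\nabla w_q\|_{L^2(K)}$ by the fine-scale inverse inequality $\|\nabla w_q\|_{L^2(K)}\le\tilde C_{\mathrm{inv}}h^{-1}\|w_q\|_{L^2(K)}$. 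This yields $\gamma_h\sim h$, which is of the claimed order only when $h\sim Hp^{-2}$, so the paper needs a final nestedness step ($V_{h^\prime}\subseteq\Vh$ for an auxiliary $h^\prime\sim Hp^{-2}$) to treat $h\ll Hp^{-2}$. Your choice $\vh=I_h(\rho q)$, with the denominator controlled by $H^1$-stability and \eqref{eq:pol2}, gives $\gamma_h\gtrsim Hp^{-2}$ uniformly in $h$ and dispenses with both the auxiliary problem and the case distinction, so it is the more economical argument. Two minor caveats: the cleanest way to guarantee $\vh\in\Vh\cap H^1_0(K)$ is to use an operator mapping into $\Vh\cap H^1_0(K)$ by construction (as in the references the paper cites), although your boundary-layer argument also works once the interpolation stencils of the nodes on $\partial K$ fit inside the $O(Hp^{-2})$-collar where $\rho q$ vanishes; and neither your argument nor the paper's proves an upper bound on the inf-sup constant --- the ``$\sim$'' in the statement is only ever substantiated as a lower bound, so your closing remark about a matching upper bound is unnecessary and is not actually established in the continuous case either.
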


\begin{proof}
As in the proof of Lemma~\ref{l:infsup}, let $\kappa \subseteq K$ be such that its edges and faces are parallel to the ones of $K$, ${\dist(\kappa,\partial K) = C_{\dist}\,Hp^{-2}}$, and
\begin{equation}\label{eq:polh}
\| q \|^2_{L^2(\kappa)} \geq \frac{1}{4}\| q \|^2_{L^2(K)}
\end{equation}
for all $q \in \VHp(K)$. 
Now, let ${\rho \in W^{1,\infty}(K)\cap H^1_0(K)}$ with 
\begin{equation*}
\begin{aligned}
&0 \,\leq \,\rho\,\leq\,1,\\
&\rho\,=\,1\quad\text{in }\kappa,\\
&\|\nabla\rho\|_{L^\infty(K)} \,\leq \,C_{\rho}\,H^{-1}p^2,
\end{aligned}
\end{equation*}
where $C_{\rho}$ depends on $C_{\dist}$.
Next, we define, for any $q \in \VHp(K)$, the auxiliary function \mbox{$w_q \in \Vh \cap H^1_0(K)$} as the solution of
\begin{equation}\label{eq:auxprob}
(w_q,\vh)_{L^2(K)} = (q,\vh)_{L^2(K)}
\end{equation}
for all $\vh \in \Vh \cap H^1_0(K)$. Note that $w_q$ is unique by the inverse inequality
\begin{equation}\label{eq:auxinv}
\|\nabla \vh\|_{L^2(K)} \leq \tilde C_{\mathrm{inv}} \,h^{-1}\,\|\vh\|_{L^2(K)} 
\end{equation}
and the Lax-Milgram Theorem. 
The last auxiliary ingredient is an estimate of the form
\begin{equation*}
\|q\|_{L^2(K)} \lesssim \|w_q\|_{L^2(K)}
\end{equation*}
which can be obtained using a projection operator ${\IhK\colon L^2(K) \to \Vh \cap H^1_0(K)}$ which fulfills the following stability and approximation properties. For all $v \in L^2(K)$, it holds that
\begin{equation}\label{eq:stabilityIhK}
\|\IhK v\|_{L^2(K)} \leq C_{\IhK} \|v\|_{L^2(K)}
\end{equation}
and, for any $v \in H_0^1(K)$, we have
\begin{equation}\label{eq:interpolationIhK}
\|h^{-1}(v-\IhK v)\|_{L^2(K)} + \|\nabla\IhK v\|_{L^2(K)} \leq C_{\IhK} \,\|\nabla v\|_{L^2(K)}.
\end{equation}
For an explicit choice of $\IhK$, we refer to \cite{Osw93,Bre94,ErnG17}. With \eqref{eq:polh}, \eqref{eq:auxprob}, \eqref{eq:stabilityIhK}, \eqref{eq:interpolationIhK}, and \eqref{eq:pol2}, we can show that 
\begin{equation*}
\begin{aligned}
\tfrac{1}{4}\|q\|_{L^2(K)}^2 &\leq \|q\|_{L^2(\kappa)}^2 \leq (q, \rho q)_{L^2(K)} = (q, \IhK(\rho q))_{L^2(K)} + (q,(\opid - \IhK)(\rho q))_{L^2(K)}\\
& = (w_q, \IhK(\rho q))_{L^2(K)} + (q,(\opid - \IhK)(\rho q))_{L^2(K)}\\
& \leq \|w_q\|_{L^2(K)}\, C_{\IhK}\,\|\rho q\|_{L^2(K)} + \|q\|_{L^2(K)}\,C_{\IhK}h\,\|\nabla(\rho q)\|_{L^2(K)}\\
&\leq C_{\IhK}\,\|w_q\|_{L^2(K)}\,\|q\|_{L^2(K)}  + C_{\IhK}(C_\rho + C_\mathrm{inv})\,hH^{-1}p^2\,\|q\|_{L^2(K)}^2.
\end{aligned}
\end{equation*}
Assuming that
\begin{equation*}
C_{\IhK}(C_\rho + C_\mathrm{inv})\,hH^{-1}p^2 \leq \frac{1}{8},
\end{equation*} 
we thus obtain
\begin{equation}\label{eq:auxqwq}
\|q\|_{L^2(K)} \leq 8\,C_{\IhK}\,\|w_q\|_{L^2(K)}.
\end{equation}
Finally, with the estimates \eqref{eq:auxqwq} and \eqref{eq:auxinv}, it holds that
\begin{equation*}
\begin{aligned}
\adjustlimits\inf_{q \in \VHp(K)}\sup_{\vh \in \Vh\cap H^1_0(K)} \frac{(q,\vh)_{L^2(K)}}{\|q\|_{L^2(K)}\,\|\nabla \vh\|_{L^2(K)}} &\geq 
\inf_{q \in \VHp(K)}\frac{(w_q,w_q)_{L^2(K)}}{\|q\|_{L^2(K)}\,\|\nabla w_q\|_{L^2(K)}}\\
&\geq \inf_{q \in \VHp(K)}\frac{1}{8\,C_{\IhK}}\frac{\|w_q\|^2_{L^2(K)}}{\|w_q\|_{L^2(K)}\,\|\nabla w_q\|_{L^2(K)}}\\
&\geq \frac{h}{8\,C_{\IhK}\tilde C_{\mathrm{inv}}} =: \gamma_h > 0.
\end{aligned}
\end{equation*}
For $h \sim Hp^{-2}$, this is the assertion. For $h \lesssim Hp^{-2}$, there exists an auxiliary $h^\prime \sim Hp^{-2}$ such that $V_{h^\prime} \subseteq \Vh$ and thus 
\begin{align*}
\adjustlimits\inf_{q \in \VHp(K)}\sup_{\vh \in \Vh\cap H^1_0(K)} \frac{(q,\vh)_{L^2(K)}}{\|q\|_{L^2(K)}\,\|\nabla \vh\|_{L^2(K)}} 
&\geq \adjustlimits\inf_{q \in \VHp(K)}\sup_{v \in V_{h^\prime}\cap H^1_0(K)} \frac{(q,v)_{L^2(K)}}{\|q\|_{L^2(K)}\,\|\nabla v\|_{L^2(K)}} \\
&\geq \gamma_{h^\prime} \sim Hp^{-2}. \qedhere
\end{align*}
\end{proof}

With Lemma~\ref{l:infsuph}, we can replace $\Ve$ (and the solution $u\in\Ve$ of \eqref{eq:PDEellweak}) in the construction of Sections~\ref{s:highorder} and~\ref{s:practical} by a conforming $Q_1$ finite element space $\Vh$ (and the classical Galerkin approximation $\uh\in \Vh$) provided that $h$ is sufficiently small with respect to $H$ and $p$ and, additionally, resolves the microscopic information on the scale $\epsilon$. 
This is quantified with the resolution conditions
\begin{equation}\label{eq:resolutionp}
C_\epsilon\, h \lesssim\frac{\Phi(p,s)}{p}\,H^{s+1} \quad\text{ and }\quad h \lesssim Hp^{-2},
\end{equation}
where the constant $C_\epsilon$ indicates the dependence on the microscopic scale $\epsilon$ as in \eqref{eq:approxfinep}. While a resolution condition on $h$ with respect to $H$ and $p$ of the form $h \leq Hp^{-s}$ for some $s \geq 1$ seems natural to resolve high-order functions, the left condition in \eqref{eq:resolutionp} is mainly motivated by the aim to retain the convergence properties with respect to $H$ and $p$ as derived in the previous subsections. In a~more practical manner, one could alternatively prescribe some certain tolerance and balance $h$, $p^\prime$, $H$, and $p$ such that the given tolerance is reached with the respective approximation. 

Note that a discrete inf-sup condition as in Lemma~\ref{l:infsuph} may also be obtained for a high-order conforming finite element space and relaxes the resolution condition \mbox{$h \lesssim H p^{-2}$} dependent on the choice of $p^\prime$. If additional smoothness conditions hold, the use of a high-order space can further provide a relaxation of the left resolution condition in \eqref{eq:resolutionp} on $h$ if $p^\prime$ is suitably coupled to $h$ and $\epsilon$ in the spirit of \cite[Cor.~5.3]{PetS12}.

Although high-order constructions may generally be considered for the fine discretization, we remain with the first-order setting where $p^\prime = 1$, which only requires minimal regularity assumptions.
We introduce the additional parameter $h$ in the above construction if $\Ve$ is replaced by $\Vh$, i.e., we write 
\begin{equation*}
\calR_h,\,\calR_h^\ell,\,\tVHhp,\,\tVHhpl\quad\text{ instead of }\quad\calR,\,\calR^\ell,\,\tVHp,\,\tVHpl.
\end{equation*} 
Further, the solution $\uhC \in \tVHhpl$ of the \emph{fully discrete multiscale method} is determined by
\begin{equation}\label{eq:PDEellLODh}
a(\uhC^\ell,\vhC) = (f,\vhC)_{L^2(D)}
\end{equation}
for all $\vhC \in \tVHhpl$. The error of the fully discrete approach is quantified in the next theorem.

\begin{theorem}[Error of the fully discrete multiscale method]\label{t:errfullydiscretep}
Assume $f \in H^k(\tri_H)$, ${k \in \N_0}$, and let $s := \min\{k,p+1\}$. Further, suppose that the resolution conditions \eqref{eq:resolutionp} hold and let $u \in \Ve$ be the solution of \eqref{eq:PDEellweak} and $\uhC^\ell \in \tVHhpl$ the solution of \eqref{eq:PDEellLODh}. Then, with the choice
\begin{equation*}
\ell \gtrsim |\log H|\,p\,(s+1) + (\log p) \,p\,(s+1),
\end{equation*}
it holds that
\begin{equation*}
\|\nabla(u - \uhC^\ell)\|_{L^2(D)} \lesssim \frac{\Phi(p,s)}{p}\,H^{s+1}\,\big(\|f\|_{L^2(D)} +  |f|_{H^s(\tri_H)}\big)
+ \Big(\frac{H}{p}\Big)^{s+1}\,\|f\|_{L^2(D)}.
\end{equation*}
\end{theorem}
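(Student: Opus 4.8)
The plan is to split the error $u-\uhC^\ell$ into two contributions: the error of a fully discrete but \emph{unlocalized} method and the additional localization error. First I would introduce the intermediate object $u_h\in\Vh$, the classical $Q_1$ Galerkin solution of \eqref{eq:finesolp}, and the fully discrete ideal multiscale solution $\uhC\in\tVHhp$ (i.e.\ the analog of \eqref{eq:PDEellOD} with $\Ve$ replaced by $\Vh$ via Lemma~\ref{l:infsuph}). Using that $\Pi$ remains surjective from $\Vh$ onto $\VHp$ under the first resolution condition $h\lesssim Hp^{-2}$, the whole machinery of Section~\ref{s:highorder} applies verbatim with $u$ replaced by $u_h$. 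In particular, the discrete analog of Lemma~\ref{l:equivform} gives $\Pi\uhC=\Pi u_h$, and the discrete analog of Theorem~\ref{t:errODp} (same proof, now $f-\Pi f$ tested against $u_h-\uhC$, whose $\Pi$-projection vanishes) yields
\begin{equation*}
\|\nabla(u_h-\uhC)\|_{L^2(D)} \lesssim \frac{\Phi(p,s)}{p}\,H^{s+1}\,|f|_{H^s(\tri_H)}.
\end{equation*}
Combining this with the finite element estimate \eqref{eq:approxfinep} and the left resolution condition in \eqref{eq:resolutionp}, which forces $C_\epsilon h\lesssim \frac{\Phi(p,s)}{p}H^{s+1}$, the term $\|\nabla(u-u_h)\|_{L^2(D)}$ is absorbed into the same bound — this is precisely where the otherwise strange-looking left condition in \eqref{eq:resolutionp} is used. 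Note $s'\le s$ here, so $\Phi(p',s')/p'$ combined with the resolution condition is harmless; the cleanest route is to just bound $\|\nabla(u-u_h)\|\lesssim \frac{\Phi(p,s)}{p}H^{s+1}\|f\|_{L^2(D)}$ directly from \eqref{eq:resolutionp}, which explains the extra $\|f\|_{L^2(D)}$ summand in the claimed estimate.

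Next I would handle the localization error $\uhC-\uhC^\ell$. The argument is a line-by-line repeat of the proof of Theorem~\ref{t:locerrorP}: quasi-optimality of $\uhC^\ell$ in $\tVHhpl$ reduces the problem to estimating $\|\nabla(\uhC - \calR_h^\ell\Pi u_h)\|$, then the triangle inequality splits off $\|\nabla(u_h-\uhC)\|$ (already bounded above) and a genuine truncation term. The truncation term is controlled exactly as in \eqref{eq:prooflocerrorRpre}--\eqref{eq:prooflocerrorR} using cutoff functions $\eta_K$, the local bubble function (Corollary~\ref{c:locbubble}, valid for $\Vh$ by Lemma~\ref{l:infsuph}), the local Lagrange-multiplier bound \eqref{eq:estLagrange}, and the decay estimate of Theorem~\ref{t:decay} and Remark~\ref{r:decay} — all of which hold with $\calR$ replaced by $\calR_h^\ell$ since their proofs only used the saddle-point structure and the (now discrete) inf-sup condition. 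This produces
\begin{equation*}
\|\nabla(\uhC-\uhC^\ell)\|_{L^2(D)} \lesssim \frac{p^3}{H}\,\ell^{(d-1)/2}\exp(-C_\mathrm{dec}\,\ell/p)\,\|\Pi u_h\|_{L^2(D)} \lesssim \frac{p^3}{H}\,\ell^{(d-1)/2}\exp(-C_\mathrm{dec}\,\ell/p)\,\|f\|_{L^2(D)},
\end{equation*}
using $L^2$-stability of $\Pi$ and the stability bound $\|\nabla u_h\|_{L^2(D)}\lesssim \alpha^{-1}\|f\|_{L^2(D)}$ of the finite element solution.

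Finally I would plug in the stated choice $\ell \gtrsim |\log H|\,p\,(s+1) + (\log p)\,p\,(s+1)$, exactly as in Corollary~\ref{cor:errorLODR}, so that the exponential factor dominates all polynomial prefactors $p^3\ell^{(d-1)/2}/H$ and the localization term is bounded by $(H/p)^{s+1}\|f\|_{L^2(D)}$. Summing the two contributions gives the claimed estimate. I expect the only real subtlety — not a deep obstacle but the point that needs care — to be verifying that every ingredient from Sections~\ref{s:highorder} and~\ref{s:practical} (surjectivity of $\Pi|_{\Vh}$, Corollary~\ref{c:locbubble}, the decay Theorem~\ref{t:decay}, and the $\calR_h^\ell$-stability bound) transfers from $\Ve$ to the conforming subspace $\Vh$; this transfer is exactly what Lemma~\ref{l:infsuph} was designed to guarantee, provided $h\lesssim Hp^{-2}$, so it suffices to invoke that lemma and remark that no other property of $\Ve$ was used in those proofs. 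Everything else is bookkeeping of the two resolution conditions in \eqref{eq:resolutionp}.
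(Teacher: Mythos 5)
Your proposal is correct and follows essentially the same route as the paper: a triangle inequality through the fine-scale Galerkin solution $\uh$, the bound $\|\nabla(u-\uh)\|_{L^2(D)}\lesssim C_\epsilon h\,\|f\|_{L^2(D)}$ absorbed via the left resolution condition in \eqref{eq:resolutionp}, and the discrete analog of Corollary~\ref{cor:errorLODR} (which you simply unpack into its Theorem~\ref{t:errODp} and Theorem~\ref{t:locerrorP} ingredients) for $\|\nabla(\uh-\uhC^\ell)\|_{L^2(D)}$. The paper states this more tersely by invoking Corollary~\ref{cor:errorLODR} ``with $\Ve$ replaced by $\Vh$''; your explicit remark that Lemma~\ref{l:infsuph} is what licenses this replacement is exactly the point the paper leaves implicit.
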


\begin{proof}
The assertion follows from a simple triangle inequality, the estimate \eqref{eq:approxfinep} with $s^\prime = 0$, the resolution conditions \eqref{eq:resolutionp}, and Corollary~\ref{cor:errorLODR} in the case where $\Ve$ is replaced by $\Vh$.
To be more precise, with the solution $\uh \in \Vh$ of \eqref{eq:finesolp}, we obtain
\begin{align*}
\|\nabla(u - \uhC^\ell)\|_{L^2(D)} &\leq \|\nabla(u - \uh)\|_{L^2(D)} + \|\nabla(\uh - \uhC^\ell)\|_{L^2(D)}\\
& \lesssim C_\epsilon \,h\, \|f\|_{L^2(D)} + \frac{\Phi(p,s)}{p}\,H^{s+1}\, |f|_{H^s(\tri_H)}
+ \Big(\frac{H}{p}\Big)^{s+1}\,\|f\|_{L^2(D)}\\
&\lesssim \frac{\Phi(p,s)}{p}\,H^{s+1}\,\big(\|f\|_{L^2(D)} +  |f|_{H^s(\tri_H)}\big)
+ \Big(\frac{H}{p}\Big)^{s+1}\,\|f\|_{L^2(D)}. \qedhere
\end{align*}
\end{proof}

\section{Numerical Experiments}\label{s:numexpp}

In this section, we present some examples to investigate the results of the previous sections. We remark that if the exact solution $u \in \Ve$ of \eqref{eq:PDEellweak} is not explicitly given, only the errors between the discrete solutions $\uh \in \Vh$ of \eqref{eq:finesolp} and $\uhC^\ell \in \tVHhpl$ of \eqref{eq:PDEellLODh} can be measured. Thus, we need to pose the assumption that the chosen mesh parameter~$h$ is indeed small enough as quantified in Section~\ref{ss:micdisc}, and use $\uh$ as the reference solution. In general, any other fine discretization could be used to obtain a reference solution such as, e.g., a discontinuous Galerkin approach as in \cite{ElfGMP13,ElfGM13}. We measure all occurring errors in the energy norm $\|\cdot\|_a := \|A^{1/2}\nabla\cdot\|_{L^2(D)}$.

\begin{figure}
	\centering
	\begin{subfigure}[b]{0.48\textwidth}
		\centering
		\includegraphics[width=1.13\textwidth]{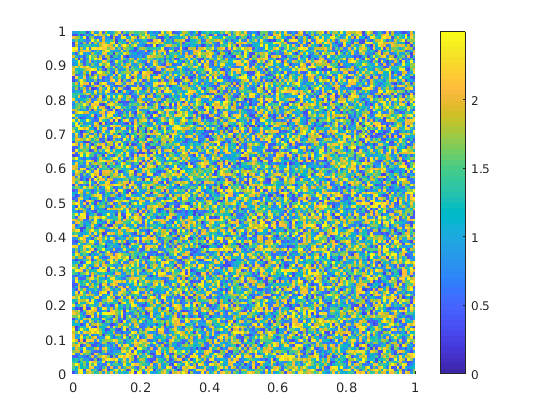}  
	\end{subfigure}
	\quad
	\begin{subfigure}[b]{0.48\textwidth}  
		\centering 
		\includegraphics[width=1.13\textwidth]{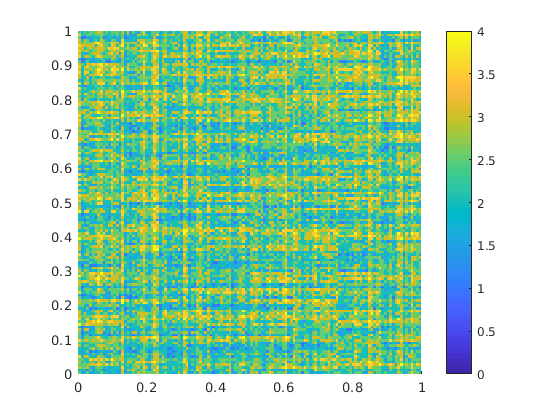}
	\end{subfigure}
	\caption[Multiscale coefficients]%
	{\small Multiscale coefficients $A_1$ (left) and $A_2$ (right) on the scale $\epsilon = 2^{-7}$.} 
	\label{fig:coeffA12}
\end{figure}
\begin{figure}
	\begin{center}
		\scalebox{0.84}{
			\begin{tikzpicture}
			
			\begin{axis}[%
			width=2.8in,
			height=2.5in,
			at={(0.745in,0.481in)},
			scale only axis,
			xmode=log,
			xmin=0.026,
			xmax=0.6,
			xlabel={\large{mesh size $H$}},
			xminorticks=true,
			ymode=log,
			ymin=7e-06,
			ymax=2,
			yminorticks=true,
			axis background/.style={fill=white},
			]
			\addplot [color=myBlue, mark=square, line width=1.5pt, mark size=2.0pt]
			table[row sep=crcr]{%
				5.000000000000000e-01     9.730667022671702e-01\\
				2.500000000000000e-01     2.075550050014089e-01\\
				1.250000000000000e-01     3.745251816543015e-02\\
				6.250000000000000e-02     2.070388614131157e-02\\
				3.125000000000000e-02     2.437237089796834e-02\\
			};
			
			\addplot [color=myBlue, mark=square, line width=1.5pt, mark size=4.0pt]
			table[row sep=crcr]{%
				5.000000000000000e-01     9.730667022671702e-01\\
				2.500000000000000e-01     2.075550050014089e-01\\
				1.250000000000000e-01     3.875464986983049e-02\\
				6.250000000000000e-02     8.230597251245588e-03\\
				3.125000000000000e-02     3.840931299971967e-03\\
			};
			
			\addplot [color=myRed, mark=o, line width=1.5pt, mark size=2.0pt]
			table[row sep=crcr]{%
				5.000000000000000e-01     4.806091090019082e-01\\
				2.500000000000000e-01     7.358267468701588e-02\\
				1.250000000000000e-01     4.812480180129881e-03\\
				6.250000000000000e-02     1.139518836274180e-03\\
				3.125000000000000e-02     1.602575083216002e-03\\
			};
			
			\addplot [color=myRed, mark=o, line width=1.5pt, mark size=4.0pt]
			table[row sep=crcr]{%
				5.000000000000000e-01     4.806091090019082e-01\\
				2.500000000000000e-01     7.358267468701588e-02\\
				1.250000000000000e-01     4.737689765928027e-03\\
				6.250000000000000e-02     2.900232887639292e-04\\
				3.125000000000000e-02     2.523957054107803e-04\\
			};
			
			\addplot [color=myGreen, mark=triangle, line width=1.5pt, mark size=2.0pt]
			table[row sep=crcr]{%
				5.000000000000000e-01     2.499572077128505e-01\\
				2.500000000000000e-01     1.029278503370555e-02\\
				1.250000000000000e-01     4.092926314435829e-04\\
				6.250000000000000e-02     4.671194628241819e-05\\
				3.125000000000000e-02     1.373507945272820e-04\\
			};
			
			\addplot [color=myGreen, mark=triangle, line width=1.5pt, mark size=4.0pt]
			table[row sep=crcr]{%
				5.000000000000000e-01     2.499572077128505e-01\\
				2.500000000000000e-01     1.029278503370555e-02\\
				1.250000000000000e-01     4.134678668024592e-04\\
				6.250000000000000e-02     1.773727064501356e-05\\
				3.125000000000000e-02     1.728678519602808e-05\\
			};
			
			\addplot [color=myOrange, mark=diamond, line width=1.5pt, mark size=4.0pt]
			table[row sep=crcr]{%
				5.000000000000000e-01     9.999992533246688e-01\\
				2.500000000000000e-01     9.721261715183733e-01\\
				1.250000000000000e-01     4.453894480390736e-01\\
				6.250000000000000e-02     1.262935055352588e-01\\
				3.125000000000000e-02     3.463332999441188e-02\\
			};
			
			\addplot [color=color3, mark=pentagon, line width=1.5pt, mark size=4.0pt]
			table[row sep=crcr]{%
				5.000000000000000e-01     1.000000000000003e+00\\
				2.500000000000000e-01     1.104750257329905e+00\\
				1.250000000000000e-01     6.669155853265974e-01\\
				6.250000000000000e-02     4.466144022629336e-01\\
				3.125000000000000e-02     3.711601189863007e-01\\
			};
			
			\addplot [color=black, dotted,line width=1.5pt]
			table[row sep=crcr]{
				0.5	0.625\\
				0.028 0.00010976\\
			};
			
			\addplot [color=black, dotted,line width=1.5pt,forget plot]
			table[row sep=crcr]{
				0.5	0.3125\\
				0.037 9.370804999999997e-06\\
			};
			\addplot [color=black, dotted,line width=1.5pt,forget plot]
			table[row sep=crcr]{
				0.5	0.15625\\
				0.073 1.0365357964999998e-05\\
			};
			\addplot [color=black, dotted,line width=1.5pt,forget plot]
			table[row sep=crcr]{
				0.5	1.25\\
				0.028 0.00392\\
			};
			\end{axis}
			
			\begin{axis}[%
			width=2.8in,
			height=2.5in,
			at={(4.4in,0.481in)},
			scale only axis,
			xmode=log,
			xmin=0.026,
			xmax=0.6,
			xlabel={\large{mesh size $H$}},
			xminorticks=true,
			ymode=log,
			ymin=6e-06,
			ymax=1.5,
			yminorticks=true,
			ylabel={\large{rel err in $\|\cdot\|_{a}$}},
			axis background/.style={fill=white},
			legend style={at={(-1.3,-.4)}, anchor=south west, legend columns=5, legend cell align=left, align=left, draw=white!15!black},
			]
			
			\addplot [color=myBlue, mark=square, line width=1.5pt, mark size=2.0pt]
			table[row sep=crcr]{
				0.500000000000000   0.183210941411053\\
				0.250000000000000   0.033294677578032\\
				0.125000000000000   0.022991075136924\\
				0.062500000000000   0.062568704725411\\
				0.031250000000000   0.139685347605383\\
			};
			\addlegendentry{$p=1$, $\ell=3\;\;$}
			
			\addplot [color=myBlue, mark=square, line width=1.5pt, mark size=4.0pt]
			table[row sep=crcr]{%
				0.500000000000000   0.183210941411053\\
				0.250000000000000   0.033294677578032\\
				0.125000000000000   0.006933137644109\\
				0.062500000000000   0.010033865713111\\
				0.031250000000000   0.023507930653259\\
			};
			\addlegendentry{$p=1$, $\ell=4\;\;$}
			
			\addplot [color=myRed, mark=o, line width=1.5pt, mark size=2.0pt]
			table[row sep=crcr]{%
				0.500000000000000   0.057732909042062\\
				0.250000000000000   0.005371676064840\\
				0.125000000000000   0.001238818334371\\
				0.062500000000000   0.003950387855762\\
				0.031250000000000   0.010875812596435\\
			};
			\addlegendentry{$p=2$, $\ell=4\;\;$}
			
			\addplot [color=myRed, mark=o, line width=1.5pt, mark size=4.0pt]
			table[row sep=crcr]{%
				0.500000000000000   0.057732909042062\\
				0.250000000000000   0.005371676064840\\
				0.125000000000000   0.000358018016055\\
				0.062500000000000   0.000463359945590\\
				0.031250000000000   0.001370249726825\\
			};
			\addlegendentry{$p=2$, $\ell=5\;\;$}
			
			\addplot [color=myGreen, mark=triangle, line width=1.5pt, mark size=2.0pt]
			table[row sep=crcr]{%
				0.500000000000000   0.014529119651863\\
				0.250000000000000   0.000580677349462\\
				0.125000000000000   0.000059127104594\\
				0.062500000000000   0.000191421201561\\
				0.031250000000000   0.000835248134753\\
			};
			\addlegendentry{$p=3$, $\ell=5\;\;$}
			
			\addplot [color=myGreen, mark=triangle, line width=1.5pt, mark size=4.0pt]
			table[row sep=crcr]{%
				0.500000000000000   0.014529119651863\\
				0.250000000000000   0.000580677349462\\
				0.125000000000000   0.000023021620692\\
				0.062500000000000   0.000017690927936\\
				0.031250000000000   0.000090135487256\\
			};
			\addlegendentry{$p=3$, $\ell=6\;\;$}
			
			\addplot [color=myOrange, mark=diamond, line width=1.5pt, mark size=4.0pt]
			table[row sep=crcr]{%
				5.000000000000000e-01    0.8559\\ 
				2.500000000000000e-01     0.4498\\ 
				1.250000000000000e-01     0.1550\\
				6.250000000000000e-02     0.0477\\ 
				3.125000000000000e-02     0.0158\\ 
			};
			\addlegendentry{LOD,\,\,\,$\ell = 2\;\;$}
			
			\addplot [color=color3, mark=pentagon, line width=1.5pt, mark size=4.0pt]
			table[row sep=crcr]{%
				5.000000000000000e-01    0.9046\\ 
				2.500000000000000e-01     0.5857\\ 
				1.250000000000000e-01     0.3389\\ 
				6.250000000000000e-02     0.2165\\ 
				3.125000000000000e-02     0.1639\\ 
			};
			\addlegendentry{$Q_1$ FEM$\;\;$}

			\addplot [color=black, dotted,line width=1.5pt]
			table[row sep=crcr]{
				0.5	0.1\\
				0.022	8.5184e-06\\
			};
			\addlegendentry{order $2,3,4,5$}
			
			\addplot [color=black, dotted,line width=1.5pt,forget plot]
			table[row sep=crcr]{
				0.5	0.05\\
				0.0572	8.563949588480001e-06\\
			};
			\addplot [color=black, dotted,line width=1.5pt,forget plot]
			table[row sep=crcr]{
				0.5	0.025\\
				0.1	8e-06\\
			};
			\addplot [color=black, dotted,line width=1.5pt,forget plot]
			table[row sep=crcr]{
				0.5	0.2\\
				0.0135 0.0001458\\
			};
			\end{axis}
			\end{tikzpicture}%
		}
	\end{center}
	\caption[Errors of the high-order multiscale method w.r.t~$H$]
	{\small Errors of the high-order multiscale method for different values of~$\ell$ and~$p$, the LOD method, and the classical conforming $Q_1$ finite element method in the relative energy norm for the first (left) and the second model (right) with respect to $H$.} 
	\label{fig:errors2d}
\end{figure}
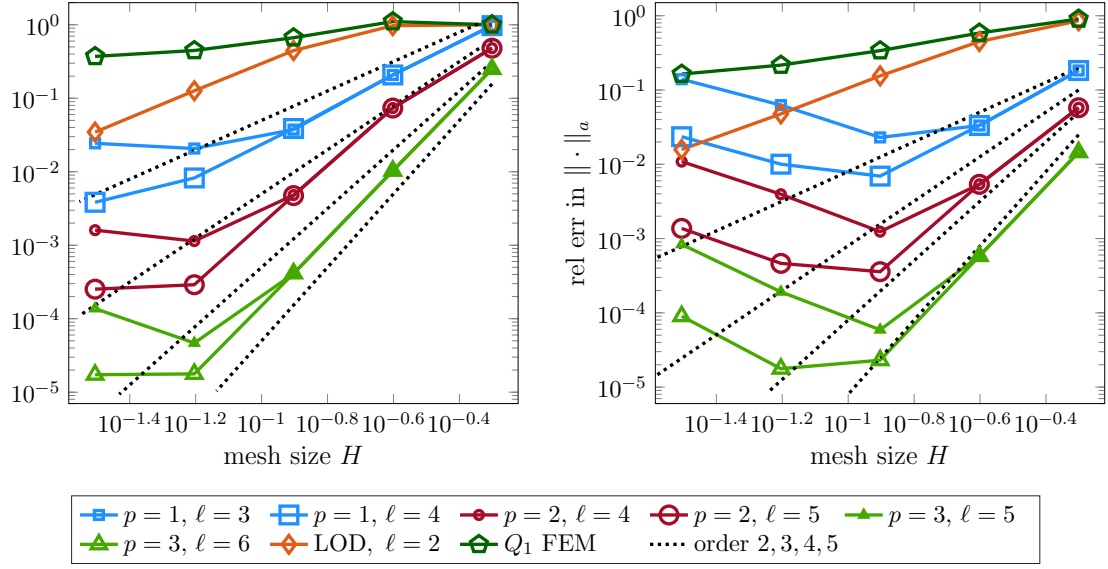
We consider the domain $D = (0,1)^2$ as well as the two scalar diffusion coefficients $A_1$ and $A_2$ as depicted in Figure~\ref{fig:coeffA12}. These coefficients are piecewise constant on a mesh $\tri_\epsilon$ with mesh parameter $\epsilon = 2^{-7}$. 
The coefficients $A_1$ and $A_2$ take values in $[0.25,2.5]$ and~$[1,4]$, respectively. 
Further, we take the right-hand sides
\begin{equation*}
f_1(x) = \sin(5\pi\,x_1)\cos(3\pi\,x_2)
\end{equation*}
and 
\begin{equation*}
f_2(x) = (x_1+\cos(3\pi\,x_1))\,x_2^3.
\end{equation*}
\noindent For the first model, we choose the coefficient $A=A_1$ and the right-hand side ${f=f_1}$ in \eqref{eq:PDEellweak} and compute the solution $\uhC^\ell \in \tVHhpl$ of \eqref{eq:PDEellLODh} for multiple choices of the polynomial degree $p$ and the localization parameter $\ell$. The relative energy errors of these approximations with respect to the reference solution on the scale $h = 2^{-9}$ are depicted in Figure~\ref{fig:errors2d}~(left).
Similarly, we present the energy errors for the second model with the coefficient $A_2$ and the right-hand side $f_2$ in Figure~\ref{fig:errors2d}~(right), where again $h = 2^{-9}$. 
The error curves in both examples show a convergence rate between $p+1$ and $p+2$ with respect to $H$ for different polynomial degrees $p$ if $\ell$ is chosen large enough. These results are in line with the findings in Theorem~\ref{t:locerrorP} which predicts a convergence rate of up to order $p+2$ in $H$ dependent on the regularity of $f$ and provided that the second term in the estimate \eqref{eq:locerrorP} is small enough. 
Apart from the observed high-order rates for appropriate parameter regimes, the two examples also indicate that there might be a pollution in terms of some negative power of $H$ as obtained from the theory. That is, instead of a stagnation of the error curve for smaller $H$, the overall error grows again if $\ell$ is not chosen appropriately. 

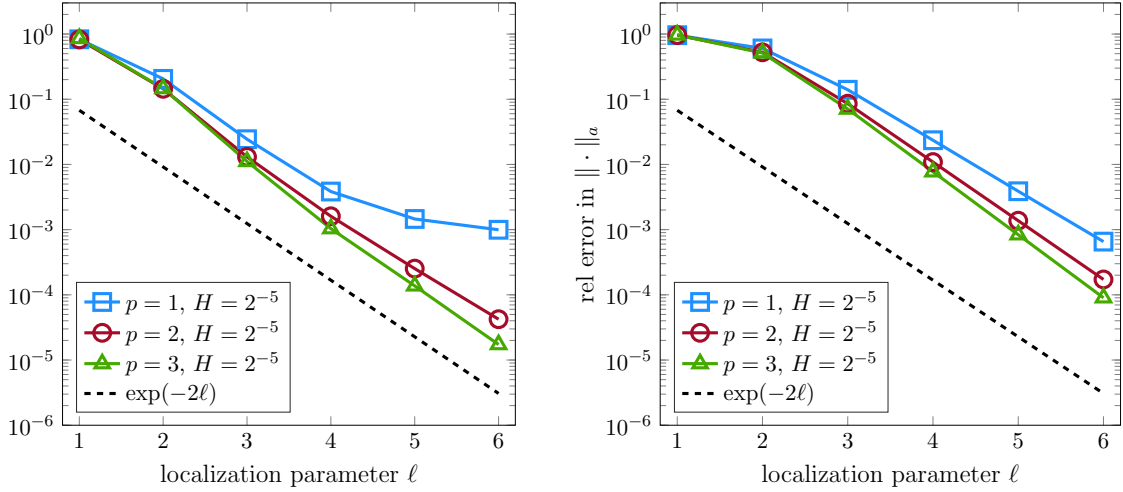
\begin{figure}
	\begin{center}
		\scalebox{0.79}{
			\begin{tikzpicture}
			
			\begin{axis}[%
			width=3.0in,
			height=2.8in,
			scale only axis,
			at={(0.758in,0.481in)},
			scale only axis,
			xmin=0.8,
			xlabel={\large{localization parameter $\ell$}},
			xmax=6.2,
			ymode=log,
			ymin=1e-06,
			ymax=3,
			yminorticks=true,
			axis background/.style={fill=white},
			legend style={legend cell align=left, align=left,
				draw=white!15!black},
			legend pos = south west
			]
			
			\addplot [color=myBlue, mark=square, line width=1.5pt, mark size=4.0pt]
			table[row sep=crcr]{%
				1.000000000000000e+00     8.312655750806304e-01\\
				2.000000000000000e+00     2.052601656086005e-01\\
				3.000000000000000e+00     2.437237089796834e-02\\
				4.000000000000000e+00     3.840931299971967e-03\\
				5.000000000000000e+00     1.457110494435946e-03\\
				6.000000000000000e+00     9.952132063468281e-04\\
			};
			\addlegendentry{$p=1$, $H=2^{-5}$}
			
			\addplot [color=myRed, mark=o, line width=1.5pt, mark size=4.0pt]
			table[row sep=crcr]{%
				1.000000000000000e+00     8.253732102133096e-01\\
				2.000000000000000e+00     1.445456531724117e-01\\
				3.000000000000000e+00     1.306743808447378e-02\\
				4.000000000000000e+00     1.602575083216002e-03\\
				5.000000000000000e+00     2.523957054107803e-04\\
				6.000000000000000e+00     4.226536086813621e-05\\
			};
			\addlegendentry{$p=2$, $H=2^{-5}$}
			
			\addplot [color=myGreen, mark=triangle, line width=1.5pt, mark size=4.0pt]
			table[row sep=crcr]{%
				1.000000000000000e+00     8.549519723339252e-01\\
				2.000000000000000e+00     1.458432388782582e-01\\
				3.000000000000000e+00     1.110233495971014e-02\\
				4.000000000000000e+00     1.035703365792050e-03\\
				5.000000000000000e+00     1.373507945272820e-04\\
				6.000000000000000e+00     1.728678519602808e-05\\
			};
			\addlegendentry{$p=3$, $H=2^{-5}$}
			
			\addplot [color=black, dashed,line width=1.5pt]
			table[row sep=crcr]{%
				1	0.0676676416183064\\
				2	0.00915781944436709\\
				3	0.00123937608833318\\
				4	0.000167731313951256\\
				5	2.26999648812424e-05\\
				6	3.0721061766641e-06\\
			};
			\addlegendentry{$\exp(-2\ell)$}
			
			\end{axis}
			\end{tikzpicture}%
		}
		\hfill
		\scalebox{0.79}{
			\begin{tikzpicture}
			
			\begin{axis}[%
			width=3.05in,
			height=2.8in,
			scale only axis,
			at={(0.772in,0.481in)},
			scale only axis,
			xmin=0.8,
			xlabel={\large localization parameter $\ell$},
			xmax=6.2,
			ymode=log,
			ymin=1e-06,
			ymax=3,
			ylabel={\large rel error in $\|\cdot\|_a$},
			yminorticks=true,
			axis background/.style={fill=white},
			legend style={legend cell align=left, align=left, draw=white!15!black},
			legend pos = south west
			]
			
			\addplot [color=myBlue, mark=square, line width=1.5pt, mark size=4.0pt]
			table[row sep=crcr]{%
				1.000000000000000   0.960426123713420\\
				2.000000000000000   0.601439834558226\\
				3.000000000000000   0.139685347605383\\
				4.000000000000000   0.023507930653259\\
				5.000000000000000   0.003880575672885\\
				6.000000000000000   0.000655064993316\\
			};
			\addlegendentry{$p=1$, $H=2^{-5}$}
			
			\addplot [color=myRed, mark=o, line width=1.5pt, mark size=4.0pt]
			table[row sep=crcr]{%
				1.000000000000000   0.962946841613927\\
				2.000000000000000   0.524232755584708\\
				3.000000000000000   0.085470782339313\\
				4.000000000000000   0.010875812596435\\
				5.000000000000000   0.001370249726825\\
				6.000000000000000   0.000172089153917\\
			};
			\addlegendentry{$p=2$, $H=2^{-5}$}
			
			\addplot [color=myGreen, mark=triangle, line width=1.5pt, mark size=4.0pt]
			table[row sep=crcr]{%
				1.000000000000000   0.968884556209862\\
				2.000000000000000   0.511211609461317\\
				3.000000000000000   0.070694806958015\\
				4.000000000000000   0.007718785636847\\
				5.000000000000000   0.000835248134753\\
				6.000000000000000   0.000090135487256\\
			};
			\addlegendentry{$p=3$, $H=2^{-5}$}
			
			\addplot [color=black, dashed,line width=1.5pt]
			table[row sep=crcr]{%
				1	0.0676676416183064\\
				2	0.00915781944436709\\
				3	0.00123937608833318\\
				4	0.000167731313951256\\
				5	2.26999648812424e-05\\
				6	3.0721061766641e-06\\
			};
			\addlegendentry{$\exp(-2\ell)$}
			
			\end{axis}
			\end{tikzpicture}%
		}
	\end{center}
	\caption[Errors of the high-order multiscale method w.r.t~$\ell$]
	{\small Errors of the high-order multiscale method in the relative energy norm for the first (left) and second model (right) with respect to $\ell$ for different values of $H$ and $p$.}
	\label{fig:errors2dEll}
\end{figure}

For comparison, Figure~\ref{fig:errors2d} also included the error curves for the classical continuous $Q_1$ finite element method as well as the first-order conforming LOD approach as in~\cite{HenP13}. The finite element method does not provide reasonable approximations in the regime where the fine oscillations of the coefficients are not resolved and leads to a stagnation relatively quickly, whereas the LOD approach shows a convergence rate that is even slightly better than predicted by the theory. Still, our multiscale approach with $p=1$ shows a higher convergence rate provided that $\ell$ is chosen appropriately. We emphasize that for a direct comparison of the two methods, one has to keep in mind that for the same mesh size $H$, our multiscale approach with $p=1$ has roughly $2^d$ times more degrees of freedom than the corresponding LOD method. 

For completeness, we present the errors of our multiscale method also with respect to the localization parameter $\ell$ in Figure~\ref{fig:errors2dEll}. The plots show the exponential convergence rate in $\ell$ as in the theory. The curves stagnate for larger values of $\ell$ where the localization error is small enough and the first term in the estimate \eqref{eq:locerrorP} dominates the overall error. 

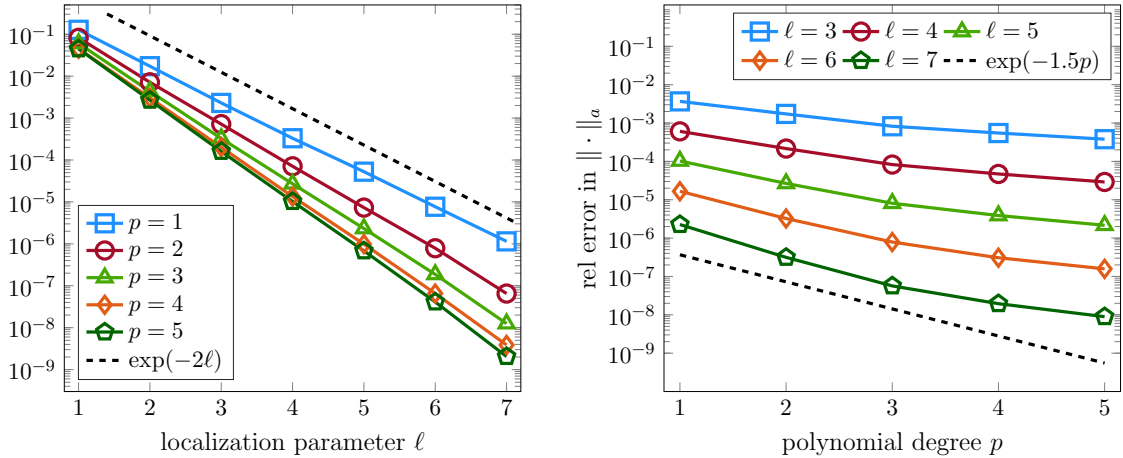
\begin{figure}
	\begin{center}
		\scalebox{0.81}{
			\begin{tikzpicture}
			
			\begin{axis}[%
			width=2.95in,
			height=2.5in,
			at={(0.772in,0.481in)},
			scale only axis,
			xmin=0.8,
			xmax=7.2,
			xlabel={\large localization parameter $\ell$},
			ymode=log,
			ymin=3e-10,
			ymax=0.5,
			yminorticks=true,
			axis background/.style={fill=white},
			legend style={legend cell align=left, align=left, draw=white!15!black},
			legend pos= south west
			]
			\addplot [color=myBlue, mark=square, line width=1.5pt, mark size=4.0pt]
			table[row sep=crcr]{%
				1.000000000000000e+00     1.276408979120657e-01\\
				2.000000000000000e+00     1.728669355055597e-02\\
				3.000000000000000e+00     2.299305829567870e-03\\
				4.000000000000000e+00     3.305971886024076e-04\\
				5.000000000000000e+00     5.269146931929767e-05\\
				6.000000000000000e+00     7.680955038354057e-06\\
				7.000000000000000e+00     1.146534907917991e-06\\
			};
			\addlegendentry{$p=1$}
			
			\addplot [color=myRed, mark=o, fill = white, line width=1.5pt, mark size=4.0pt]
			table[row sep=crcr]{%
				1.000000000000000e+00     8.222068997408045e-02\\
				2.000000000000000e+00     7.146578193815145e-03\\
				3.000000000000000e+00     7.186213703627812e-04\\
				4.000000000000000e+00     7.093845605192213e-05\\
				5.000000000000000e+00     7.325399486586540e-06\\
				6.000000000000000e+00     7.865803488377733e-07\\
				7.000000000000000e+00     6.526349775248619e-08\\
			};
			\addlegendentry{$p=2$}
			
			\addplot [color=myGreen, mark=triangle, line width=1.5pt, mark size=4.0pt]
			table[row sep=crcr]{%
				1.000000000000000e+00     6.025907874518150e-02\\
				2.000000000000000e+00     4.346783590381866e-03\\
				3.000000000000000e+00     3.269061754676900e-04\\
				4.000000000000000e+00     2.710203942646042e-05\\
				5.000000000000000e+00     2.320569771286898e-06\\
				6.000000000000000e+00     1.865936529052594e-07\\
				7.000000000000000e+00     1.251168447270370e-08\\
			};
			\addlegendentry{$p=3$}
			
			\addplot [color=myOrange, mark=diamond, line width=1.5pt, mark size=4.0pt]
			table[row sep=crcr]{%
				1.000000000000000e+00     4.524920161446414e-02\\
				2.000000000000000e+00     3.034707318456008e-03\\
				3.000000000000000e+00     2.017084802613536e-04\\
				4.000000000000000e+00     1.343769274181286e-05\\
				5.000000000000000e+00     9.841746422128850e-07\\
				6.000000000000000e+00     6.537390902196184e-08\\
				7.000000000000000e+00     3.870393582386785e-09\\
			};
			\addlegendentry{$p=4$}
			
			\addplot [color=color3, mark=pentagon, line width=1.5pt, mark size=4.0pt]
			table[row sep=crcr]{%
				1.000000000000000e+00     4.473439025059622e-02\\
				2.000000000000000e+00     2.665787388862203e-03\\
				3.000000000000000e+00     1.602830695335256e-04\\
				4.000000000000000e+00     1.023185832095829e-05\\
				5.000000000000000e+00     6.795324615883041e-07\\
				6.000000000000000e+00     4.106951706333585e-08\\
				7.000000000000000e+00     2.054051126203781e-09\\
			};
			\addlegendentry{$p=5$}
			
			\addplot [color=black, dashed,line width=1.5pt]
			table[row sep=crcr]{%
				1.4	0.30405031312608988\\
				7.1	3.4039906719881712e-06\\
			};
			\addlegendentry{$\exp(-2\ell)$}
			
			\end{axis}
			\end{tikzpicture}%
		}
		\hfill
		\scalebox{0.81}{
			\begin{tikzpicture}
			
			\begin{axis}[%
			width=2.95in,
			height=2.5in,
			at={(0.772in,0.481in)},
			scale only axis,
			xtick={1,2,3,4,5},
			ytick={1e-9,1e-8,1e-7,1e-6,1e-5,1e-4,1e-3,1e-2,1e-1},
			xmin=0.85,
			xmax=5.15,
			xlabel={\large polynomial degree $p$},
			ymode=log,
			ymin=1e-10,
			ymax=1.2,
			yminorticks=true,
			ylabel={\large rel error in $\|\cdot\|_a$},
			axis background/.style={fill=white},
			legend style={legend cell align=left, align=left,legend columns=3, draw=white!15!black}
			]

			\addplot [color=myBlue, mark=square, line width=1.5pt, mark size=4.0pt]
			table[row sep=crcr]{%
				1.000000000000000e+00     3.672989482469776e-03\\
				2.000000000000000e+00     1.730266066992126e-03\\
				3.000000000000000e+00     8.253571602896247e-04\\
				4.000000000000000e+00     5.493533811245637e-04\\
				5.000000000000000e+00     3.796424989145964e-04\\
			};
			\addlegendentry{$\ell=3$}

			\addplot [color=myRed, mark=o, fill = white, line width=1.5pt, mark size=4.0pt]
			table[row sep=crcr]{%
				1.000000000000000e+00     6.070825424920125e-04\\
				2.000000000000000e+00     2.162428994343363e-04\\
				3.000000000000000e+00     8.272933732343173e-05\\
				4.000000000000000e+00     4.707318176770397e-05\\
				5.000000000000000e+00     2.909049991962444e-05\\
			};
			\addlegendentry{$\ell=4$}

			\addplot [color=myGreen, mark=triangle, line width=1.5pt, mark size=4.0pt]
			table[row sep=crcr]{%
				1.000000000000000e+00     1.025201493010978e-04\\
				2.000000000000000e+00     2.666405683965315e-05\\
				3.000000000000000e+00     8.106062239348244e-06\\
				4.000000000000000e+00     3.882773146683274e-06\\
				5.000000000000000e+00     2.163903221022366e-06\\
			};
			\addlegendentry{$\ell=5$}

			\addplot [color=myOrange, mark=diamond, line width=1.5pt, mark size=4.0pt]
			table[row sep=crcr]{%
				1.000000000000000e+00     1.662932300738938e-05\\
				2.000000000000000e+00     3.245819179639724e-06\\
				3.000000000000000e+00     7.898182308741890e-07\\
				4.000000000000000e+00     3.081489860952445e-07\\
				5.000000000000000e+00     1.579674156802406e-07\\
			};
			\addlegendentry{$\ell=6$}
			
			\addplot [color=color3, mark=pentagon, line width=1.5pt, mark size=4.0pt]
			table[row sep=crcr]{%
				1.000000000000000e+00     2.260590238061900e-06\\
				2.000000000000000e+00     3.149795407169433e-07\\
				3.000000000000000e+00     5.676159255781229e-08\\
				4.000000000000000e+00     1.954554077657083e-08\\
				5.000000000000000e+00     8.945464735157870e-09\\
			};
			\addlegendentry{$\ell=7$}
			
			\addplot [color=black, dashed,line width=1.5pt]
			table[row sep=crcr]{%
				1	3.6787944117144229e-07\\
				5	5.5308437014783356e-10\\
			};
			\addlegendentry{$\exp(-1.5p)$}
			
			\end{axis}
			\end{tikzpicture}%
		}
	\end{center}
	\caption[Localization errors of high-order multiscale basis functions]
	{\small Localization errors of the high-order multiscale basis functions on the scale $H = 2^{-4}$ for the first model with respect to $\ell$ (left) and $p$ (right) in the relative energy norm.} 
	\label{fig:decaybasisfunc}
\end{figure}

Since the previous experiments indicate that the exponential convergence in $\ell$ even slightly improves when $p$ is increased, we further investigate the sharpness of the decay estimate quantified in Theorem~\ref{t:decay}. To this end, for $H = 2^{-4}$ we choose the element $K = [0.4375,0.5]^2\in \tri_H$ and compute the relative energy error between the ideal multiscale basis functions $\tLamKj := \calR_h \LamKj$ and its localized versions $\tLamKjl := \calR^\ell_h \LamKj$ for different values of $\ell$ and $j\in\{1,\ldots,\nK\}$.
For the first model, Figure~\ref{fig:decaybasisfunc}~(left) shows the decay of the localization error for different basis functions with respect to $\ell$. To be more precise, for each~$p$ we show the localization error corresponding to the highest-order basis function $\LamKj$ (with maximal polynomial degree $p$ in both components). The results seem to contradict the scaling in $p$ as predicted by Theorem~\ref{t:decay}. Instead, the rate even slightly improves when the polynomial degree $p$ is increased.  
In Figure~\ref{fig:decaybasisfunc}~(right), we show the localization error for different $\ell$ and $p$ corresponding to the respective lowest-order basis function, i.e., the one whose $L^2$-projection onto $\VHp(K)$ is constant. Again, the curves show an error reduction when $p$ is increased which is slightly amplified by $\ell$. That is, these results also indicate a better scaling in $p$ than quantified in Theorem~\ref{t:decay}. The commencing stagnation of the errors in Figure~\ref{fig:decaybasisfunc}~(right) for larger $p$ is probably related to the fact that $h = 2^{-9}$ is not fine enough to handle higher polynomial degrees. 
For further numerical experiments, see also \cite[Sec.~3.4]{Mai20}

\subsection*{Discussion}
The numerical experiments of this section overall confirm the theoretical results for our high-order multiscale method. The only deviation is in the scaling with respect to the polynomial degree $p$ which seems to be better than predicted by the theory. That is, the result presented in Theorem~\ref{t:decay} is most likely not sharp with respect to $p$ due to the mismatch between interpolation estimates and inverse inequalities as mentioned in Remark~\ref{r:pdep}. An enhanced estimate would directly relax the condition on $\ell$ which is quantified in \eqref{eq:lscale}.

Note that although the approach numerically and theoretically shows a pollution of the total error for small mesh sizes $H$, this issue can be compensated for by a correct scaling of $\ell$. Nevertheless, the method shows its best potential for relatively coarse mesh sizes which, combined with higher-order polynomials, already provide very good approximation results. Moreover, the locality of the high-order construction in principle allows us to even choose different polynomial degrees on each of the coarse elements dependent, e.g., on the local regularity of $f$. 

\section{Conclusion}\label{s:conclusion} 
Within this paper, we have considered an elliptic model problem with possibly varying (fine-scale) diffusion coefficient. We have proposed a multiscale technique motivated by the LOD method and gamblets that is able to achieve high-order convergence rates with respect to the mesh size and the polynomial degree independently of oscillations of the coefficient. The method can be applied for problems involving general unstructured coefficients and only requires minimal regularity assumptions on the domain, the diffusion coefficient, and the exact solution. We have proved decay estimates for the multiscale basis functions and suggested an appropriate localization strategy to reduce the computational complexity. Numerical experiments confirm the theoretical findings and even indicate a better behavior with respect to the polynomial degree than theoretically predicted.

%
%
\section*{Acknowledgments} 
R.~Maier acknowledges support by the German Research Foundation (DFG) in the Priority Program 1748 \emph{Reliable simulation techniques in solid mechanics} (PE2143/2-2). Further, the author thanks D.~Peterseim for providing large support in the code development. 
%
%

%
%

\begin{thebibliography}{EGMP13}
	
	\bibitem[AB05]{AllB05}
	G.~Allaire and R.~Brizzi.
	\newblock A multiscale finite element method for numerical homogenization.
	\newblock {\em Multiscale Model. Simul.}, 4(3):790--812, 2005.
	
	\bibitem[AB12]{AbdB12}
	A.~Abdulle and Y.~Bai.
	\newblock Reduced basis finite element heterogeneous multiscale method for
	high-order discretizations of elliptic homogenization problems.
	\newblock {\em J. Comput. Phys.}, 231(21):7014--7036, 2012.
	
	\bibitem[AEEV12]{AbdEEV12}
	A.~Abdulle, W.~E, B.~Engquist, and E.~{Vanden-Eijnden}.
	\newblock The heterogeneous multiscale method.
	\newblock {\em Acta Numer.}, 21:1--87, 2012.
	
	\bibitem[AHPV13]{AraHPV13}
	R.~Araya, C.~Harder, D.~Paredes, and F.~Valentin.
	\newblock Multiscale hybrid-mixed method.
	\newblock {\em SIAM J. Numer. Anal.}, 51(6):3505--3531, 2013.
	
	\bibitem[BBF13]{BofBF13}
	D.~Boffi, F.~Brezzi, and M.~Fortin.
	\newblock {\em Mixed finite element methods and applications}.
	\newblock Springer, Heidelberg, 2013.
	
	\bibitem[BCO94]{BabCO94}
	I.~Babu\v{s}ka, G.~Caloz, and J.~E. Osborn.
	\newblock Special finite element methods for a class of second order elliptic
	problems with rough coefficients.
	\newblock {\em SIAM J. Numer. Anal.}, 31(4):945--981, 1994.
	
	\bibitem[BG96]{BabG96}
	I.~Babu{\v{s}}ka and B.~Q. Guo.
	\newblock Approximation properties of the $hp$ version of the finite element
	method.
	\newblock {\em Comput. Methods Appl. Mech. Engrg.}, 133(3-4):319--346, 1996.
	
	\bibitem[BL11]{BabL11}
	I.~Babu\v{s}ka and R.~Lipton.
	\newblock Optimal local approximation spaces for generalized finite element
	methods with application to multiscale problems.
	\newblock {\em Multiscale Model. Simul.}, 9(1):373--406, 2011.
	
	\bibitem[BO83]{BabO83}
	I.~Babu\v{s}ka and J.~E. Osborn.
	\newblock Generalized finite element methods: their performance and their
	relation to mixed methods.
	\newblock {\em SIAM J. Numer. Anal.}, 20(3):510--536, 1983.
	
	\bibitem[Bre94]{Bre94}
	S.~C. Brenner.
	\newblock Two-level additive {S}chwarz preconditioners for nonconforming finite
	elements.
	\newblock {\em Contemp. Math.}, 180:9--14, 1994.
	
	\bibitem[CEL19]{CicEL19}
	M.~Cicuttin, A.~Ern, and S.~Lemaire.
	\newblock A hybrid high-order method for highly oscillatory elliptic problems.
	\newblock {\em Comput. Methods Appl. Math.}, 19(4):723--748, 2019.
	
	\bibitem[Cia78]{Cia78}
	P.~G. Ciarlet.
	\newblock {\em The Finite Element Method for Elliptic Problems}.
	\newblock North-Holland, Amsterdam, 1978.
	
	\bibitem[EE03]{EE03}
	W.~E and B.~Engquist.
	\newblock The heterogeneous multiscale methods.
	\newblock {\em Commun. Math. Sci.}, 1(1):87--132, 2003.
	
	\bibitem[EE05]{EE05}
	W.~E and B.~Engquist.
	\newblock The heterogeneous multi-scale method for homogenization problems.
	\newblock In {\em Multiscale methods in science and engineering}, volume~44 of
	{\em Lect. Notes Comput. Sci. Eng.}, pages 89--110. Springer, Berlin,
	Heidelberg, 2005.
	
	\bibitem[EG17]{ErnG17}
	A.~Ern and J.-L. Guermond.
	\newblock Finite element quasi-interpolation and best approximation.
	\newblock {\em ESAIM Math. Model. Numer. Anal.}, 51(4):1367--1385, 2017.
	
	\bibitem[EGH13]{EfeGH13}
	Y.~Efendiev, J.~Galvis, and T.~Y. Hou.
	\newblock Generalized multiscale finite element methods ({GM}s{FEM}).
	\newblock {\em J. Comput. Phys.}, 251:116--135, 2013.
	
	\bibitem[EGM13]{ElfGM13}
	D.~Elfverson, E.~H. Georgoulis, and A.~M{\aa}lqvist.
	\newblock An adaptive discontinuous {G}alerkin multiscale method for elliptic
	problems.
	\newblock {\em Multiscale Model. Simul.}, 11(3):747--765, 2013.
	
	\bibitem[EGMP13]{ElfGMP13}
	D.~Elfverson, E.~H. Georgoulis, A.~M{\aa}lqvist, and D.~Peterseim.
	\newblock Convergence of a discontinuous {G}alerkin multiscale method.
	\newblock {\em SIAM J. Numer. Anal.}, 51(6):3351--3372, 2013.
	
	\bibitem[Eva10]{Eva10}
	L.~C. Evans.
	\newblock {\em Partial Differential Equations}, volume~19 of {\em Graduate
		Studies in Mathematics}.
	\newblock American Mathematical Society, Providence, RI, second edition, 2010.
	
	\bibitem[Geo03]{Geo03}
	E.~H. Georgoulis.
	\newblock {\em {D}iscontinuous {G}alerkin methods on shape-regular and
		anisotropic meshes}.
	\newblock PhD thesis, University of Oxford, 2003.
	
	\bibitem[Geo08]{Geo08}
	E.~H. Georgoulis.
	\newblock Inverse-type estimates on $hp$-finite element spaces and
	applications.
	\newblock {\em Math. Comp.}, 77(261):201--219, 2008.
	
	\bibitem[GGS12]{GraGS12}
	L.~Grasedyck, I.~Greff, and S.~Sauter.
	\newblock The {AL} basis for the solution of elliptic problems in heterogeneous
	media.
	\newblock {\em Multiscale Model. Simul.}, 10(1):245--258, 2012.
	
	\bibitem[GHS05]{GraHS05}
	I.~G. Graham, W.~Hackbusch, and S.~A. Sauter.
	\newblock Finite elements on degenerate meshes: inverse-type inequalities and
	applications.
	\newblock {\em IMA J. Numer. Anal.}, 25(2):379--407, 2005.
	
	\bibitem[HP13]{HenP13}
	P.~Henning and D.~Peterseim.
	\newblock Oversampling for the multiscale finite element method.
	\newblock {\em Multiscale Model. Simul.}, 11(4):1149--1175, 2013.
	
	\bibitem[HPV13]{HarPV13}
	C.~Harder, D.~Paredes, and F.~Valentin.
	\newblock A family of multiscale hybrid-mixed finite element methods for the
	{D}arcy equation with rough coefficients.
	\newblock {\em J. Comput. Phys.}, 245:107--130, 2013.
	
	\bibitem[HSS02]{HouSS02}
	P.~Houston, C.~Schwab, and E.~S\"{u}li.
	\newblock Discontinuous {$hp$}-finite element methods for
	advection-diffusion-reaction problems.
	\newblock {\em SIAM J. Numer. Anal.}, 39(6):2133--2163, 2002.
	
	\bibitem[HW97]{HowW97}
	T.~Y. Hou and X.-H. Wu.
	\newblock A multiscale finite element method for elliptic problems in composite
	materials and porous media.
	\newblock {\em J. Comput. Phys.}, 134(1):169--189, 1997.
	
	\bibitem[HZZ14]{HesZZ14}
	J.~S. Hesthaven, S.~Zhang, and X.~Zhu.
	\newblock High-order multiscale finite element method for elliptic problems.
	\newblock {\em Multiscale Model. Simul.}, 12(2):650--666, 2014.
	
	\bibitem[LMT12]{LiPT12}
	R.~Li, P.~Ming, and F.~Tang.
	\newblock An efficient high order heterogeneous multiscale method for elliptic
	problems.
	\newblock {\em Multiscale Model. Simul.}, 10(1):259--283, 2012.
	
	\bibitem[Mai20]{Mai20}
	R.~Maier.
	\newblock {\em Computational Multiscale Methods in Unstructured Heterogeneous
		Media}.
	\newblock PhD thesis, University of Augsburg, 2020.
	
	\bibitem[Mel05]{Mel05}
	J.~M. Melenk.
	\newblock $hp$-interpolation of nonsmooth functions and an application to
	$hp$-a posteriori error estimation.
	\newblock {\em SIAM J. Numer. Anal.}, 43(1):127--155, 2005.
	
	\bibitem[MP14]{MalP14}
	A.~M{\aa}lqvist and D.~Peterseim.
	\newblock Localization of elliptic multiscale problems.
	\newblock {\em Math. Comp.}, 83(290):2583--2603, 2014.
	
	\bibitem[OS19]{OwhS19}
	H.~Owhadi and C.~Scovel.
	\newblock {\em Operator-adapted wavelets, fast solvers, and numerical
		homogenization}, volume~35 of {\em Cambridge Monographs on Applied and
		Computational Mathematics}.
	\newblock Cambridge University Press, Cambridge, 2019.
	
	\bibitem[Osw93]{Osw93}
	P.~Oswald.
	\newblock On a {BPX}-preconditioner for {P}1 elements.
	\newblock {\em Computing}, 51(2):125--133, 1993.
	
	\bibitem[Owh17]{Owh17}
	H.~Owhadi.
	\newblock Multigrid with rough coefficients and multiresolution operator
	decomposition from hierarchical information games.
	\newblock {\em SIAM Rev.}, 59(1):99--149, 2017.
	
	\bibitem[OZB14]{OwhZB14}
	H.~Owhadi, L.~Zhang, and L.~Berlyand.
	\newblock Polyharmonic homogenization, rough polyharmonic splines and sparse
	super-localization.
	\newblock {\em ESAIM Math. Model. Numer. Anal.}, 48(2):517--552, 2014.
	
	\bibitem[PS12]{PetS12}
	D.~Peterseim and S.~Sauter.
	\newblock Finite elements for elliptic problems with highly varying,
	nonperiodic diffusion matrix.
	\newblock {\em Multiscale Model. Simul.}, 10(3):665--695, 2012.
	
	\bibitem[Sch98]{Sch98}
	C.~Schwab.
	\newblock {\em {$p$}- and {$hp$}-finite element methods. Theory and
		applications in solid and fluid mechanics}.
	\newblock Numerical Mathematics and Scientific Computation. The Clarendon
	Press, Oxford University Press, New York, 1998.
	
	\bibitem[Wey16]{Wey16}
	M.~Weymuth.
	\newblock {\em Adaptive local basis for elliptic problems with
		{$L^\infty$}-coefficients}.
	\newblock PhD thesis, University of Zurich, 2016.
	
\end{thebibliography}
\end{document}